\documentclass[a4paper,11pt]{amsart}
\usepackage{amstext,amsfonts,amsthm,graphicx,amssymb,amscd,epsfig}
\usepackage{amsmath}
\usepackage[ansinew]{inputenc}    
\usepackage{psfrag}
\usepackage{mathrsfs}

\theoremstyle{definition}
\newtheorem{definition}{Definition}[section]
\newtheorem{ex}[definition]{Example}
\newtheorem{rem}[definition]{Remark}

\theoremstyle{plain}
\newtheorem{prop}[definition]{Proposition}
\newtheorem{lem}[definition]{Lemma}
\newtheorem{coro}[definition]{Corollary}
\newtheorem{teo}[definition]{Theorem}

\newfont{\bbb}{msbm10 scaled\magstephalf}     

\def\R{\mathbb R}

\def\R{\mbox{\bbb R}}

\setlength{\textwidth}{13.8cm} \setlength{\textheight}{20cm}

\title{A relation between the curvature ellipse and the curvature parabola}

\author{P. Benedini Riul, R. Oset Sinha}

\date{}

\address{Instituto de Ci\^encias Matem\'aticas e de Computa\c{c}\~ao - USP,
Av. Trabalhador s\~ao-carlense, 400 - Centro,
CEP: 13566-590 - S\~ao Carlos - SP, Brazil}

\email{benedini@usp.br}

\address{Departament de Matemàtiques,
Universitat de Val\`encia, Campus de Burjassot, 46100 Burjassot,
Spain}

\email{raul.oset@uv.es}

\thanks{Work of P. Benedini Riul supported by CAPES - PVE  88887.122685/2016-00}
\thanks{Work of R. Oset Sinha partially supported by DGICYT Grant MTM2015--64013--P}
\subjclass[2000]{Primary 58K05; Secondary 57R45, 53A05} \keywords{immersed surface in 4-space, singular surface in 3-space, curvature ellipse, curvature
parabola, second fundamental form, asymptotic directions}

\begin{document}
\begin{abstract}
At each point in an immersed surface in $\mathbb R^4$ there is a curvature ellipse in the normal plane which codifies all the local second order geometry of the
surface. More recently, at the singular point of a corank 1 singular surface in $\mathbb R^3$, a curvature parabola in the normal plane which codifies all the
local second order geometry has been defined. When projecting a regular surface in $\mathbb R^4$ to $\mathbb R^3$ in a tangent direction corank 1 singularities
appear generically. The projection has a cross-cap singularity unless the direction of projection is asymptotic, where more degenerate singularities can appear.
In this paper we relate the geometry of an immersed surface in $\mathbb R^4$ at a certain point to the geometry of the projection of the
surface to $\mathbb R^3$ at the singular point. In particular we relate the curvature ellipse of the surface to the curvature parabola of its singular projection.
\end{abstract}

\maketitle

\section{Introduction}

In his seminal paper \cite{Little}, J. Little studied the second order geometry of submanifolds immersed in Euclidean spaces, in particular of immersed surfaces in $\mathbb R^4$. He defined the curvature ellipse of a surface $N\subset \mathbb R^4$ at a point $p$, as the curve formed by the curvature vectors of the normal sections of $N$ by the hyperplane $\langle\theta\rangle\oplus N_pN$, where $\theta\in[0,2\pi]$ parametrises the unit circle in $T_pN$. This is a plane curve whose trace is contained in the normal
plane $N_{p}N$ at $p$ and which may degenerate into a segment (radial or not) or even a point. The curvature ellipse contains all
the second order geometrical information of the surface. Isometric invariants of the curvature ellipse,
such as its area, are isometric invariants of the surface. Also, the position
of the point $p$ with respect the curvature ellipse (outside, on, inside) gives
an isometric invariant partition of the surface. This paper has inspired
a lot of research on the subject (see \cite{BruceNogueira,BruceTari,GarciaMochidaFusterRuas,MochidaFusterRuas,MochidaFusterRuas2,BallesterosTari,OsetSinhaTari,RomeroFuster}, amongst others).

Martins and Nuño-Ballesteros in \cite{MartinsBallesteros} define a
curvature parabola for corank 1 surfaces $M\subset\mathbb{R}^{3}$
which contains all the second order geometrical information. This
object is also a plane curve, its trace lies in the normal space of
$M$ and may degenerate into a half-line, a line or a point. In that
paper the authors defined the second fundamental form for a corank 1
singular surface in $\R^3$ and used it to define asymptotic and
binormal directions and the umbilic curvature, which are used to
obtain results regarding the contact of the surface with planes and
spheres.

When projecting orthogonally an immersed surface $N$ in $\mathbb R^4$ to $\mathbb R^3$, the composition of the parametrisation of $N$ with the projection can be seen locally as a map germ $(\mathbb R^2,0)\rightarrow (\mathbb R^3,0)$. If the direction of projection is tangent to the surface, singularities appear. In particular, if the direction is asymptotic, singularities more degenerate than a cross-cap (or Whitney umbrella) appear (\cite{BruceNogueira,mondthesis}).

It is natural to wonder wether there is any relation between the curvature ellipse at a point $p\in N\subset \mathbb R^4$ and the curvature parabola at the projection of the point in the singular projection. In this paper we relate the geometry of the surface in $\mathbb R^4$ to the geometry of the singular projected surface in $\mathbb R^3$. We establish relations between asymptotic and binormal directions and prove the following Theorem relating the curvature loci:

\begin{teo}\label{main}
Let $X:U\rightarrow \R^4$ be the parametrisation of a regular surface $N=X(U)$ and consider $p\in N$. Let ${\bf v}\in T_pN$ and consider $\pi_{\bf
v}:N\rightarrow \R^3$ the orthogonal projection of $N$ to $\R^3$. Let $\Delta_e$ be the curvature ellipse of $N$ at $p$ and let $\Delta_p$ be the curvature
parabola of $\pi_{\bf v}(N)$ at $\pi_{\bf v}(p)$. The following hold:
\begin{enumerate}
\item[i)] $\Delta_e$ is an ellipse with $p$ lying inside it if and only if $\Delta_p$ is a parabola with $\pi_{\bf v}(p)$ lying inside it.
\item[ii)] $\Delta_e$ is an ellipse with $p$ lying outside it or a segment whose line does not contain $p$ if and only if $\Delta_p$ is a parabola with
    $\pi_{\bf v}(p)$ lying outside it (when $\bf v$ is not asymptotic) or a half-line whose line does not contain $\pi_{\bf v}(p)$ (otherwise).
\item[iii)] $\Delta_e$ is an ellipse with $p$ lying on it if and only if $\Delta_p$ is a parabola with $\pi_{\bf v}(p)$ lying on it (when $\bf v$ is not
    asymptotic) or a line which does not contain $\pi_{\bf v}(p)$ (otherwise).
\item[iv)] $\Delta_e$ is an segment whose line contains $p$ or a point different from $p$ if and only if $\Delta_p$ is a line which contains $\pi_{\bf
    v}(p)$, a half-line whose line contains $\pi_{\bf v}(p)$ or a point different from $\pi_{\bf v}(p)$.
\item[v)] $\Delta_e$ is the point $p$ if and only if $\Delta_p$ is the point $\pi_{\bf v}(p)$.
\end{enumerate}
\end{teo}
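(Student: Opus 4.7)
The plan is to put $N$ in a normal form that simultaneously produces explicit parametrisations of $\Delta_e$ and $\Delta_p$ inside a common normal plane, and then to exploit a direct scaling relation between them. By an orthogonal change of coordinates in $\mathbb{R}^4$, take $p = 0$, $T_pN = \operatorname{span}(e_1,e_2)$, $N_pN = \operatorname{span}(e_3,e_4)$, and, after rotating $T_pN$ if needed, $\mathbf v = e_1$. Writing $N$ in Monge form as $X(x,y) = (x,y,f_1(x,y),f_2(x,y))$ with $f_i = \tfrac12(a_i x^2 + 2b_i xy + c_i y^2) + O(3)$, set $A = (a_1,a_2)$, $B = (b_1,b_2)$, $C = (c_1,c_2)$ in the basis $(e_3,e_4)$ of $N_pN$. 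The second fundamental form of $N$ is the quadratic map $II \colon T_pN \to N_pN$ with $II(v_1,v_2) = v_1^2 A + 2v_1 v_2 B + v_2^2 C$, and $\eta_e(\phi) = II(\cos\phi,\sin\phi)$ for $\phi \in [0,2\pi]$.

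For the projection, $\pi_{\mathbf v}\circ X = (y,f_1,f_2)$ has $\ker d_0 = \operatorname{span}(\partial_x)$ and image spanned by $e_2$, so its normal plane $N^1$ (in the sense of Martins and Nu\~no-Ballesteros) coincides with $N_pN$; the first fundamental form at $0$ reduces to $I(u,u) = u_2^2$, so $\{I = 1\}$ is the pair of affine lines $v_2 = \pm 1$, on which the singular second fundamental form agrees with $II$. Hence $\eta_p(t) = II(t,1) = t^2 A + 2t B + C$, $t \in \mathbb{R}$. Both $\Delta_e$ and $\Delta_p$ are therefore images of the same quadratic map $II$ in $N_pN$, and they satisfy the scaling identity
$$\eta_p(\cot\phi) = \frac{\eta_e(\phi)}{\sin^2\phi}, \qquad \phi \in (0,\pi),$$
which follows from $II((\cos\phi,\sin\phi)/\sin\phi) = II(\cos\phi,\sin\phi)/\sin^2\phi$. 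Consequently, writing $\Gamma_e = II(T_pN)$ and $\Gamma_p = II(\{v_2 \ne 0\}) \cup \{0\}$ for the "cones from the origin", one has $\Gamma_e = \Gamma_p \cup \mathbb{R}_{\geq 0}\,A$ with $A = II(\mathbf v,\mathbf v)$; in particular $\overline{\Gamma_p} = \Gamma_e$, and the two cones can differ at most along the single ray in direction $A$.

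The position of the origin relative to a conic in $N_pN$ is encoded by the shape of its cone: the entire plane for $p$ strictly inside a non-degenerate ellipse or parabola; a closed half-plane for $p$ lying on such a curve or for a degenerate parabola that is a line missing $p$; a proper wedge for $p$ strictly outside, or for a segment or half-line whose line does not contain $p$; a line for a segment (or line, in the parabola case) through $p$; a ray for a half-line through $p$ or a point distinct from $p$; and $\{0\}$ when the curve reduces to $p$ itself. Because $\overline{\Gamma_p} = \Gamma_e$, each item of the theorem becomes an assertion matching cone shapes with degeneracy types. The degeneracy of $\eta_p$ is controlled by $A\wedge B$ (true parabola iff $A\wedge B \ne 0$, half-line iff $A\wedge B = 0$ with $A \ne 0$, line iff $A = 0 \ne B$, point iff $A = B = 0$); the condition $A\wedge B = 0$ is exactly "$\mathbf v$ is asymptotic" as established earlier in the paper. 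The degeneracy of $\eta_e$ is controlled analogously by $(A - C)\wedge B$. The "asymptotic / not" dichotomies in (ii)--(iv) single out precisely the configurations in which the ray $\mathbb{R}_{\geq 0}\,A$ lies outside $\Gamma_p$ and must be recovered from the ellipse: when $A = 0$, the ellipse passes through $p$ at the critical parameter $\phi = 0$ that lies outside the range of the scaling identity, while the parabola collapses to a line that can miss $\pi_{\mathbf v}(p)$ entirely.

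The principal difficulty lies in the careful enumeration underlying (ii)--(iv): one must check all four combinations of (ellipse degenerate / not) $\times$ ($\mathbf v$ asymptotic / not) against the algebraic conditions on $A,B,C$ and the corresponding cone shapes, verifying in particular that the "otherwise" branches of the theorem correspond precisely to the ray-discrepancy situations. The scaling identity together with the cone equality $\overline{\Gamma_p} = \Gamma_e$ provides the conceptual skeleton, but the remaining verification proceeds case by case and requires attention to the degenerate corners (especially $A = 0$) where the single-ray difference between $\Gamma_p$ and $\Gamma_e$ becomes significant.
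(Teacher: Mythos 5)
Your argument is essentially correct but follows a genuinely different route from the paper. The paper's proof leans on three external inputs: the equality of the second fundamental form coefficients of $N$ and $\pi_{\bf v}(N)$ (so that the number of asymptotic directions, $0/1/2/\infty$, is preserved under projection), Mond's coefficient criteria for the $\mathcal{A}^2$-type of the $2$-jet (hence for the type of $\Delta_p$: parabola, half-line, line, point), and the sign of the resultant $\Delta(p)$; each item is then settled by matching the asymptotic-direction count against the position of the point relative to each curve. You instead exploit the homogeneity of $II$ directly: both curves are restrictions of the same quadratic map to two different ``unit circles'' (the Euclidean one for $\Delta_e$, the degenerate one $\{v_2=\pm1\}$ for $\Delta_p$), which yields the scaling identity $\eta_p(\cot\phi)=\eta_e(\phi)/\sin^2\phi$ and hence the equality of the cones from the origin up to closure. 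This is more conceptual and makes the geometric mechanism transparent (the single ray $\mathbb{R}_{\ge0}\,II({\bf v})$ is exactly what the parabola ``loses at infinity''), and it recovers the asymptotic/non-asymptotic dichotomy of items (ii)--(iv) from the degeneracy criteria $A\wedge B$ and $(A-C)\wedge B$ rather than from the singularity-theoretic Theorem~\ref{projection}. The paper's route, by contrast, buys the extra information used later in the text (which Mond singularity occurs for each configuration of coefficients).

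Two caveats you should repair before this counts as a complete proof. First, the identity $\overline{\Gamma_p}=\Gamma_e$ is not quite right as stated: $\Gamma_e$ itself fails to be closed when $p\in\Delta_e$ (the cone over a circle through the origin omits the two tangent rays), and the cone over a non-degenerate parabola with the origin inside is the plane minus an open ray, not the plane; the correct statement is $\overline{\Gamma_p}=\overline{\Gamma_e}$, and your dictionary between configurations and cone shapes must consistently be read for \emph{closed} cones. Second, the entire content of items (ii)--(iv) is the exhaustive matching of closed-cone shapes (plane, half-plane, salient $2$-dimensional wedge, line, ray, origin) against the degeneracy types on each side, including checking that no other configuration (e.g.\ a half-line, or a segment seen from an external point on its line) can produce a half-plane or a full plane; you name this as ``the principal difficulty'' and defer it. The checks do all go through, but as written they are asserted rather than performed, so the proposal is a sound and attractive skeleton rather than a finished argument.
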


Using our results we explain and recover results from \cite{BallesterosTari} and \cite{OsetSinhaTari}.

\section{Preliminaries}\label{section-notation}

\subsection{Curvature ellipse}

Given a smooth surface $N\subset\mathbb{R}^{4}$ and $X:U\rightarrow\mathbb{R}^{4}$
a local parametrisation of $N$ with $U\subset\mathbb{R}^{2}$ an open subset, let
$\{e_{1},e_{2},e_{3},e_{4}\}$ be an orthogonal frame of $\mathbb{R}^{4}$ such that at any $u\in U$,
$\{e_{1}(u),e_{2}(u)\}$ is a basis for $T_{p}N$ and $\{e_{3}(u),e_{4}(u)\}$ is a basis for
$N_{p}N$ at $p=X(u)$.
The second fundamental form of $N$ is the vector valued quadratic form
$II_{p}:T_{p}N\rightarrow N_{p}N$ given by
$$II_{p}(\textbf{w})=(l_{1}w_{1}^{2}+2m_{1}w_{1}w_{2}+n_{1}w_{2}^{2})e_{3}+(l_{2}w_{1}^{2}+2m_{2}w_{1}w_{2}+n_{2}w_{2}^{2})e_{4},$$
where $l_{i}=\langle X_{xx},e_{i+2}\rangle,\ m_{i}=\langle X_{xy},e_{i+2}\rangle$
and $n_{i}=\langle X_{yy},e_{i+2}\rangle$ for $i=1,2$ are
called the coefficients of the second fundamental form with respect to the
frame above and $\textbf{w}=w_{1}e_{1}+w_{2}e_{2}\in T_{p}N$. The matrix of the second fundamental
form with respect to the orthonormal frame above is given by
$$
\alpha=\left(
         \begin{array}{ccc}
           l_{1} & m_{1} & n_{1} \\
           l_{2} & m_{2} & n_{2} \\
         \end{array}
       \right).
$$

Consider a point $p\in N$ and the unit circle $S^{1}$ in $T_{p}N$ parametrised by
$\theta\in [0,2\pi]$. The curvature vectors $\eta(\theta)$ of the normal sections of
$N$ by the hyperplane $\langle\theta\rangle\oplus N_{p}N$ form an ellipse in the
normal plane $N_{p}N$, called the \emph{curvature ellipse} of $N$ at $p$, that can also
be seen as the image of the map
$\eta:S^{1}\subset T_{p}N\rightarrow N_{p}N$, where
\begin{equation}\label{ellipse}
\eta(\theta)=\sum_{i=1}^{2}(l_{i}\cos^{2}(\theta)+2m_{i}\cos(\theta)\sin(\theta)+n_{i}\sin^{2}(\theta))e_{i+2}.
\end{equation}
Note that, if we write $\textbf{u}=\cos(\theta)e_{1}+\sin(\theta)e_{2}\in S^{1}$,
$II_{p}(\textbf{u})=\eta(\theta)$.

The classification of the points in the surface is made using the curvature ellipse:

\begin{definition}
A point $p\in N$ is called semiumbilic if the curvature ellipse is a line segment which
does not contain $p$. If the curvature ellipse is a
radial segment, the point $p$ is called an inflection point. An
inflection point is of real type, (resp. imaginary type,
at) if $p$ is an interior point of the
radial segment, (resp. does not belong to it, is one of its end points).
When the curvature ellipse reduces to a point, $p$ is called umbilic. Moreover,
if the point is $p$ itself, then $p$ is said to be a
flat umbilic. A non inflection point $p\in N$ is called elliptic (resp. hyperbolic,
parabolic) when it lies inside (resp. outside, on) the curvature ellipse.
\end{definition}

The \emph{resultant} is a scalar invariant of the surface defined by Little in \cite{Little}, given by
$$\Delta=\frac{1}{4}(4(l_{1}m_{2}-m_{1}n_{2})(m_{1}n_{2}-n_{1}m_{2})-(l_{1}n_{2}-n_{1}l_{2})^{2}).$$
The resultant has the following property: $p$ is a point on the curvature ellipse if and only if
$\Delta(p)=0$. Moreover, the signal of $\Delta$ determines if $p$ lies inside the
ellipse $(\Delta(p)>0)$ or outside $(\Delta(p)<0)$. Therefore, a point $p\in N$ is hyperbolic or elliptic according to whether $\Delta(p)$ is negative
or positive, respectively. If $\Delta(p)$ is equal to zero, the point is parabolic
or an inflexion, according to the rank of $\alpha$: $p$ is parabolic if the rank
is $2$ and an inflection if it is less than $2$.


The curvature ellipse and the resultant are invariant under the action
of the geometric subgroup $SO(2)\times SO(2)$ on $T_{p}N\times N_{p}N$.
However, if the interest is \emph{affine} invariants, that is, properties that
remain invariant under the action of $GL(2,\mathbb{R})\times GL(2,\mathbb{R})$, some differences appear:
the concept of an inflection point is an affine invariant, but semi-umblicity is not.

The curvature ellipse can also be seen as the image of the unit circle in $T_{p}N$ under a map
defined by a pair of quadratic forms $(Q_{1},Q_{2})$. This pair of quadratic forms is the
$2$-jet of the $1$-flat map $F:(\mathbb{R}^{2},0)\rightarrow(\mathbb{R}^{2},0)$ whose graph,
in orthogonal coordinates, is locally the surface $N$. Each point on the surface determines
a pair of quadratics $(Q_{1},Q_{2})=(l_{1}x^{2}+2m_{1}xy+n_{1}y^{2},l_{2}x^{2}+2m_{2}xy+n_{2}y^{2})$ and
the group $\mathcal{G}=GL(2,\mathbb{R})\times GL(2,\mathbb{R})$ acts on these pairs of binary
forms $(Q_{1},Q_{2})$ and provides the
$\mathcal{G}$-orbits listed in Table \ref{tab:quadraticforms}. A point is called an elliptic/hyperbolic/parabolic/inflection point according to the classification of its corresponding orbit as in
Table \ref{tab:quadraticforms}. This definition coincides with the definition given by the relative position to the curvature ellipse (\cite{BruceNogueira}).

\begin{table}[tp]
\caption{The $\mathcal{G}$-classes of pairs of quadratic forms.}
\centering
{\begin{tabular}{ccc}
\hline
$\mathcal{G}$-class & Name\\
\hline
$(x^{2},y^{2})$ & hyperbolic point\cr
$(xy,x^{2}-y^{2})$ & elliptic point\cr
$(x^{2},xy)$ & parabolic point\cr
$(x^{2}\pm y^{2},0)$ & inflection point\cr
$(x^{2},0)$ & degenerate inflection\cr
$(0,0)$ & degenerate inflection\cr
\hline
\end{tabular}
}
\label{tab:quadraticforms}
\end{table}

A tangent direction $\theta$ at $p\in N$ is called an \emph{asymptotic direction}
at $p$ if $\eta(\theta)$ and $\frac{d\eta}{d\theta}(\theta)$ are linear dependent vectors in
$N_{p} N$, where $\eta(\theta)$ is a parametrisation of the curvature ellipse
as in (\ref{ellipse}). A curve on $N$ whose tangent at each point is an asymptotic
direction is called an asymptotic curve.

The following theorem gives a characterization for asymptotic curves for regular
surfaces in $\mathbb{R}^{4}$.

\begin{lem}\label{asymptotic1}
Let $X:U\rightarrow\mathbb{R}^{4}$ be a local parametrisation of a surface $N$
and denote by $l_{1},\ m_{1},\ n_{1},\ l_{2},\ m_{2},\ n_{2}$ the coefficients
of its second fundamental form with respect to any frame
$\{X_{x},X_{y},\textbf{f}_{3},\textbf{f}_{4}\}$ of $T_{p}N\times N_{p}N$
which depends smoothly on $p=X(x,y)$. Then the asymptotic curves of $N$ are
are the solutions curves of the binary differential equation:
\begin{equation}\label{BDE}
(l_{1}m_{2}-l_{2}m_{1})dx^{2}+(l_{1}n_{2}-l_{2}n_{1})dxdy+(m_{1}n_{2}-m_{2}n_{1})dy^{2}=0,
\end{equation}
which can also be written as the following determinant form:
$$
\left|
  \begin{array}{ccc}
    dy^{2} & -dxdy & dx^{2} \\
    l_{1} & m_{1} & n_{1} \\
    l_{2} & m_{2} & n_{2} \\
  \end{array}
\right|=0.
$$
\end{lem}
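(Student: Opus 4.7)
The plan is to work directly from the definition of asymptotic direction given just before the lemma: a direction parametrised by $\theta$ is asymptotic at $p$ exactly when $\eta(\theta)$ and $\eta'(\theta)$ are linearly dependent vectors in the two-dimensional space $N_pN$. Writing $\eta(\theta)=A_1(\theta)f_3+A_2(\theta)f_4$ in the chosen normal frame, where by \eqref{ellipse} one has
$$A_i(\theta)=l_i\cos^2\theta+2m_i\cos\theta\sin\theta+n_i\sin^2\theta,\qquad i=1,2,$$
this linear dependence is equivalent to the vanishing of the $2\times 2$ determinant $A_1A_2'-A_2A_1'$.

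The heart of the proof is then a direct polynomial computation. Differentiating gives $A_i'(\theta)=2\bigl[m_i(\cos^2\theta-\sin^2\theta)+(n_i-l_i)\cos\theta\sin\theta\bigr]$. Expanding $A_1A_2'-A_2A_1'$ as a homogeneous polynomial of degree $4$ in $c=\cos\theta$ and $s=\sin\theta$ and collecting the monomials $c^4,\,c^3s,\,c^2s^2,\,cs^3,\,s^4$, the $l_1l_2$ and $m_1m_2$ contributions cancel and the expression factorises as
$$A_1A_2'-A_2A_1'=2(c^2+s^2)\bigl[(l_1m_2-l_2m_1)c^2+(l_1n_2-l_2n_1)cs+(m_1n_2-m_2n_1)s^2\bigr].$$
That the factor $c^2+s^2$ must appear is expected a priori, since the asymptotic locus is a projective condition on $(dx:dy)$ and therefore cut out by a form of degree $2$; this also serves as a useful check on the bookkeeping.

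Using $c^2+s^2=1$, the asymptotic condition reduces to the vanishing of the bracket; substituting $(dx,dy)=(\cos\theta,\sin\theta)$ and invoking the homogeneity of degree $2$ of that bracket (so that any representative of the tangent direction works) produces exactly the BDE stated in \eqref{BDE}. The determinantal form then follows by a Laplace expansion of the given $3\times 3$ matrix along its top row: the three minors are $m_1n_2-m_2n_1$, $l_1n_2-l_2n_1$ and $l_1m_2-l_2m_1$, which paired with the entries $dy^2,\,-dxdy,\,dx^2$ and the cofactor signs reproduce exactly the left-hand side of \eqref{BDE}.

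The only real obstacle is the algebraic bookkeeping in the coefficient of $c^2s^2$, which receives contributions from every pair of entries of the $2\times 3$ matrix $\alpha$ of the second fundamental form; however, since the target factorisation $2(c^2+s^2)Q(c,s)$ is known in advance, this reduces to a routine verification that the expected quadratic $Q$ is indeed recovered.
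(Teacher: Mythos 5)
The paper itself offers no proof of this lemma; it is quoted as a classical characterisation (going back to Little and to Mochida--Romero Fuster--Ruas), so there is no ``paper's route'' to compare against. Your computation is correct and does supply a valid proof: with $A_i(\theta)=l_i c^2+2m_i cs+n_i s^2$ and $A_i'(\theta)=2\bigl[m_i(c^2-s^2)+(n_i-l_i)cs\bigr]$, expanding $A_1A_2'-A_2A_1'$ gives coefficients $2(l_1m_2-l_2m_1)$ on $c^4$, $2(l_1n_2-l_2n_1)$ on both $c^3s$ and $cs^3$, $2(l_1m_2-l_2m_1+m_1n_2-m_2n_1)$ on $c^2s^2$, and $2(m_1n_2-m_2n_1)$ on $s^4$, which is exactly $2(c^2+s^2)\,Q(c,s)$ with $Q$ the quadratic in \eqref{BDE}; the Laplace expansion of the $3\times 3$ determinant along its top row also checks out, sign conventions included. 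The one point you should make explicit is that the lemma is stated for an \emph{arbitrary} smooth frame $\{X_x,X_y,\mathbf f_3,\mathbf f_4\}$, whereas your argument lives entirely in the orthonormal frame $\{e_1,e_2,e_3,e_4\}$ in which the parametrisation \eqref{ellipse} of $\eta$ is written. To close this you need the routine covariance observation: a change of normal frame multiplies each of the three $2\times 2$ minors of $\alpha$ by the (nonzero) determinant of the normal change of basis, and a change of tangent frame transforms the triple of minors exactly as the coefficients of a quadratic form in $(dx,dy)$, so the zero set of the BDE is frame-independent. This is a small addition, not a flaw in the main computation.
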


The discriminant of the differential equation (\ref{BDE}) coincides with the resultant $\Delta$ and so there are $0/1/2/\infty$ asymptotic directions according to wether the point is elliptic/parabolic/hyperbolic (or semiumbilic)/inflection.

Asymptotic directions can also be described via the singularities of projections
to hyperplanes.

\begin{teo}[\cite{mondthesis},\cite{BruceNogueira}]\label{projection}
A tangent direction $v$ at $p$ on $N$ is an asymptotic direction if and only if
the projection in the direction $v$ yields a singularity more degenerate than
a cross-cap.
\end{teo}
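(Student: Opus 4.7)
The plan is to place $p$ at the origin in Monge form and reduce the problem to a $2$-jet computation on the projected map germ, then invoke Whitney's cross-cap criterion. After a rigid motion of $\R^4$ we may assume $p=0$, $T_pN=\langle e_1,e_2\rangle$, $N_pN=\langle e_3,e_4\rangle$, and $N$ is locally the graph
\[
X(x,y)=\bigl(x,\,y,\,f_1(x,y),\,f_2(x,y)\bigr),\qquad f_i(x,y)=\tfrac12\bigl(l_i x^2+2m_i xy+n_i y^2\bigr)+O(3),
\]
where the $l_i,m_i,n_i$ are the coefficients of $II_p$ relative to $\{e_1,e_2,e_3,e_4\}$. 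A simultaneous rotation in the $(x,y)$-plane and in $(e_3,e_4)$ preserves this form and transforms the pair $(Q_1,Q_2)$ covariantly, so without loss of generality $\mathbf{v}=e_1$, which corresponds to $\theta=0$ in the parametrisation \eqref{ellipse}.

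The orthogonal projection of $N$ along $\mathbf{v}$ onto $\mathbf{v}^{\perp}=\langle e_2,e_3,e_4\rangle$ is locally represented by the map germ
\[
\phi(x,y)=\bigl(y,\,f_1(x,y),\,f_2(x,y)\bigr):(\R^2,0)\to(\R^3,0).
\]
Since $f_1,f_2$ are $1$-flat at the origin, $d\phi_0$ has rank one with kernel $\langle\partial_x\rangle$ and image $\langle(1,0,0)\rangle$. By Whitney's criterion, a corank-one germ $(\R^2,0)\to(\R^3,0)$ is $\A$-equivalent to the cross-cap (the unique $\A$-stable corank-one germ in this dimension) if and only if
\[
\det\bigl(\phi_y(0)\mid\phi_{xx}(0)\mid\phi_{xy}(0)\bigr)\neq 0.
\]
Substituting $\phi_y(0)=(1,0,0)$, $\phi_{xx}(0)=(0,l_1,l_2)$ and $\phi_{xy}(0)=(0,m_1,m_2)$, this determinant equals $l_1m_2-l_2m_1$.

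From \eqref{ellipse} we compute $\eta(0)=l_1e_3+l_2e_4$ and $\eta'(0)=2(m_1e_3+m_2e_4)$, so $\mathbf{v}=e_1$ is asymptotic iff these two vectors are linearly dependent, which happens precisely when $l_1m_2-l_2m_1=0$. This is exactly the same scalar whose non-vanishing characterises the cross-cap above, and it also coincides with the leading coefficient of the binary differential equation \eqref{BDE} at $(dx,dy)=(1,0)$. Hence the cross-cap criterion fails iff $\mathbf{v}$ is asymptotic, proving both directions.

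The principal obstacle is invoking Whitney's criterion in its sharp $2$-jet form and confirming that no higher-order terms of $f_1,f_2$ interfere; once the kernel of $d\phi_0$ is identified with $\partial_x$, the $2$-jets of $f_1,f_2$ already carry all the relevant information, and the identification of the degeneracy scalar with the asymptotic-direction condition is immediate. A minor point worth checking is equivariance of the construction under rotations of the tangent plane, so that the reduction $\mathbf{v}=e_1$ is legitimate; this follows from the fact that both the curvature ellipse and the $\A$-class of $\phi$ are invariant under orthogonal changes of frame.
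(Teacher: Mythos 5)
Your proof is correct. Note that the paper does not prove this theorem at all --- it is quoted from Mond's thesis and Bruce--Nogueira --- and your argument is essentially the standard one from those sources: put $N$ in Monge form, observe that the projected germ $\phi(x,y)=(y,f_1,f_2)$ has corank one with kernel $\partial_x$, apply Whitney's $2$-jet criterion (legitimate since the cross-cap is $2$-$\A$-determined) to get the scalar $l_1m_2-l_2m_1$, and identify it with the asymptotic condition $\det(\eta(0),\eta'(0))=0$, i.e.\ the coefficient of $dx^2$ in the BDE \eqref{BDE}.
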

Generically the singularities that appear in the projection are those of $\mathcal A_e$-codimension less than
or equal to 3 in Mond's list (See Table \ref{tab:Mondcodimle3}).

\begin{table}[tp]
\caption{Classes of $\mathcal A$-map-germs of $\mathcal A_e$-codimension $\le 3$ (\cite{mond}).}
\centering
{\begin{tabular}{ccc}
\hline
Name & Normal form & ${\mathcal A}_e$-codimension\\
\hline
Immersion & $(x,y,0)$ &0\cr
Crosscap & $(x,y^2,xy)$ &0\cr
$S^{\pm}_{k}$ & $(x, y^2, y^3 \pm x^{k+1}y)$, $k=1,2,3$ & $k$\cr
$B^{\pm}_{k}$ & $(x, y^2, x^2y \pm y^{2k+1})$, $k=2,3$ & $k$ \cr
$C^{\pm}_{3}$ & $(x, y^2, xy^3 \pm x^{3}y)$ & $3$\cr
$H_k$ & $(x,xy+y^{3k-1},y^3)$, $k=2,3$ & $k$\cr
$P_3$ *& $(x,xy+y^3,xy^2+ay^{4}),\,a\ne 0,\frac{1}{2},1,\frac{3}{2}$ & $3$\cr
\hline
\end{tabular}
}

* The codimension of $P_3$ is that of its stratum.
\label{tab:Mondcodimle3}
\end{table}

A usual tool for getting geometrical information of a smooth surface $N$ is
to study their generic contacts with hyperplanes. Such contact is measured
by the singularities of the height function on $N$. Let $X:U\rightarrow\mathbb{R}^{4}$
a local parametrisation of $N$, the family of height functions $H:U\times S^{3}\rightarrow\mathbb{R}$
is given by $H(u,v)=\langle X(u),v\rangle$.
For $v$ fixed, we have the height function $h_{v}$ on $N$ given by $h_{v}(u)=H(u,v)$.
A point $p=X(u)$ is a singular point of $h_{v}$ if and only if $v$ is a
normal vector to $N$ at $p$.
A hyperplane orthogonal to the direction $v$ is an \emph{osculating
hyperplane} of $N$ at $p = X(u)$ if it is tangent to $N$ at $p$ and $h_{v}$ has a
degenerate (i.e., non Morse) singularity at $u$. In such case we call the direction $v$ a
\emph{binormal} direction of $N$ at $p$.

%

\subsection{Corank $1$ surfaces in $\mathbb{R}^{3}$}
Here we present a brief study of surfaces in $\mathbb{R}^{3}$ with corank $1$ singularities.
For more details, see \cite{MartinsBallesteros}.
Given a corank $1$ surface $M\subset\mathbb{R}^{3}$ at $p\in M$, we shall
assume it as the image of a smooth map $g:\tilde{M}\rightarrow\mathbb{R}^{3}$,
with $\tilde{M}$ is a regular surface and $q\in\tilde{M}$ is a corank $1$ point
of $g$ such that $g(q)=p\in M$. Taking $\varphi:U\subset\tilde{M}\rightarrow\mathbb{R}^{2}$
a coordinate system, where $U$ is an opened neighborhood of $q\in\tilde{M}$, $f=g\circ\varphi^{-1}$
is a local parametrisation of $M$ at $p$.

The tangent line to $M$ at $p$ is the set $T_{p}M=\mbox{im}(dg_{q})$, where
$dg_{q}:T_{q}\tilde{M}\rightarrow T_{p}\mathbb{R}^{4}$. The normal plane
$N_{p}M$ is the subspace that satisfies  $T_{p}\mathbb{R}^{4}=T_{p}M\oplus N_{p}M$.
The \emph{first fundamental form} $I:T_{q}\tilde{M}\times T_{q}\tilde{M}\rightarrow\mathbb{R}$
is given by
$$
I(X,Y)=\langle dg_{q}(X),dg_{q}(Y)\rangle,\ \forall\ X,Y\in T_{q}\tilde{M}.
$$
If $f=g\circ\varphi^{-1}$ is a local parametrisation of $M$ at $p$ as before and
$\{\partial_{x},\partial_{y}\}$ is a basis for $T_{q}\tilde{M}$, the coefficients
of the first fundamental form with respect to $\varphi$ are:
$$
\begin{array}{c}
E(q)=I(\partial_{x},\partial_{x})=\langle f_{x},f_{x}\rangle(\varphi(q)),\ F(q)=I(\partial_{x},\partial_{y})=\langle f_{x},f_{y}\rangle(\varphi(q))\\
G(q)=I(\partial_{y},\partial_{y})=\langle f_{y},f_{y}\rangle(\varphi(q)),
\end{array}
$$
and taking $X=a\partial_{x}+b\partial_{y}\in T_{q}\tilde{M}$,
$I(X,X)=a^{2}E(q)+2abF(q)+b^{2}G(q)$.
Let $\perp:T_{p}\mathbb{R}^{3}\rightarrow N_{p}M$, be the orthogonal projection onto
the normal plane. The \emph{second fundamental form} of $M$ at $p$
$II:T_{q}\tilde{M}\times T_{q}\tilde{M}\rightarrow N_{p}M$ is the symmetric bilinear map
such that
$$\begin{array}{c}
II(\partial_{x},\partial_{x})=f_{xx}^{\perp}(\varphi(q)),\ II(\partial_{x},\partial_{y})=f_{xy}^{\perp}(\varphi(q))\ \mbox{and}\
II(\partial_{y},\partial_{y})=f_{yy}^{\perp}(\varphi(q)).
\end{array}
$$

The definition of the second fundamental form does not depend
on the choice of local coordinates on $\tilde{M}$. Futhermore, given
a vector $\nu\in N_{p}M$, we define the \emph{second fundamental form in
the direction $\nu$} of $M$ at $p$: $II_{\nu}:T_{q}\tilde{M}\times T_{q}\tilde{M}\rightarrow\mathbb{R}$
whose expression is $II_{\nu}(X,Y)=\langle II(X,Y),\nu\rangle$, for all
$X,Y\in T_{q}\tilde{M}$. The coefficients of $II_{\nu}$ in coordinates are
$$l_{\nu}(q)=\langle f_{xx}^{\perp},\nu\rangle(q),\ m_{\nu}(q)=\langle f_{xy}^{\perp},\nu\rangle(q)\ \mbox{and}\ n_{\nu}(q)=\langle
f_{yy}^{\perp},\nu\rangle(q).$$
For $X=a\partial_{x}+b\partial_{y}\in T_{q}\tilde{M}$, we have
$II_{v}(X,X)=a^{2}l_{\nu}(q)+2abm_{\nu}(q)+b^{2}n_{\nu}(q)$ and fixing an
orthonormal frame $\{\nu_{1},\nu_{2}\}$ of $N_{p}M$,
$$\begin{array}{cl}\label{2ff}
II(X,X) & =II_{\nu_{1}}(X,X)+II_{\nu_{2}}(X,X)\\
        & =(a^{2}l_{\nu_{1}}+2abm_{\nu_{1}}+b^{2}n_{\nu_{1}})\nu_{1}+(a^{2}l_{\nu_{2}}+2abm_{\nu_{2}}+b^{2}n_{\nu_{2}}),
\end{array}
$$
with the coefficients calculated in $q$. We also can represent the
second fundamental form by the matrix of coefficients
$$
\left(
  \begin{array}{ccc}
    l_{\nu_{1}} & m_{\nu_{1}} & n_{\nu_{1}} \\
    l_{\nu_{2}} & m_{\nu_{2}} & n_{\nu_{2}} \\
  \end{array}
\right).
$$

The \emph{curvature parabola} is the set $\Delta_{p}\subset N_{p}M$ given by
$\eta_{q}(C_{q})$ where $C_{q}\subset T_{q}\tilde{M}$ is the subset of unit
vectors and $\eta_{q}:C_{q}\rightarrow N_{p}M$ is defined by
$\eta_{q}(X)=II(X,X)$.

The curvature parabola is a plane curve that can degenerate into a line,
a half-line or a point. The definition does not depend on the choice of coordinates
for $\tilde{M}$, however it depends on the map $g$ which parametrises $M$.
Since $g$ has corank $1$ at $q\in\tilde{M}$, it is possible to choose a coordinate
system and make rotations in $\mathbb{R}^{3}$ in a way that $f(x,y)=(x,f_{2}(x,y),f_{3}(x,y))$
and $(f_{i})_{x}(\varphi(q))=(f_{i})_{y}(\varphi(q))=0$ for $i=2,3$. Hence,
we obtain $E=1$, $F=G=0$ and $C_{q}=\{X=(\pm1,y)|\ y\in\mathbb{R}\}$. Therefore,
fixing an orthonormal frame $\{\nu_{1},\nu_{2}\}$ of $N_{p}M$ and using (\ref{2ff}),
\begin{equation}\label{parabola}
\eta(y)=(l_{\nu_{1}}+2m_{\nu_{1}}y+n_{\nu_{1}}y^{2})\nu_{1}+(l_{\nu_{2}}+2m_{\nu_{2}}y+n_{\nu_{2}}y^{2})\nu_{2}
\end{equation}
is a parametrisation for $\Delta_{p}$ in $N_{p}M$.

In \cite{mond}, Mond showed that all corank $1$ map germs $f:(\mathbb{R}^{2},0)\rightarrow(\mathbb{R}^{3},0)$
can be partitioned according to its $2$-jets, $j^{2}f(0)$, under the action of
$\mathcal{A}^{2}$, the space of $2$-jets of diffeomorphisms in
source and target. The space of $2$-jets $j^{2}f(0)$ of map germs
$f:(\mathbb{R}^{2},0)\rightarrow(\mathbb{R}^{3},0)$ is denoted by
$J^{2}(2,3)$ and $\Sigma^{1}J^{2}(2,3)$ is the subset of $2$-jets of corank $1$.

\begin{prop}[\cite{mond}]
There exist four $\mathcal{A}^{2}$-orbits in $\Sigma^{1}J^{2}(2,3)$:
$$(x,y^{2},xy),\ (x,y^{2},0),\ (x,xy,0)\ \mbox{and}\ (x,0,0).$$
\end{prop}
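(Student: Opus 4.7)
The plan is to simplify $j^2 f(0)$ step by step, bringing it to one of the four claimed normal forms via the $\mathcal{A}^2$-action, and then to verify that the four normal forms lie in distinct orbits. First I would normalize the $1$-jet: since $f$ has corank $1$ at $0$, $df(0)$ has rank $1$, so after linear changes of coordinates in source and target I may assume $j^1 f(0) = (x, 0, 0)$. Then $j^2 f(0) = (x + A(x,y),\, Q_2(x,y),\, Q_3(x,y))$ with $A, Q_2, Q_3$ homogeneous quadratic, and the source change $(x, y) \mapsto (x - A(x, y), y)$ eliminates the quadratic contribution to the first component modulo cubics, leaving $j^2 f(0) = (x,\, Q_2(x,y),\, Q_3(x,y))$.

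Next I would apply the $2$-jet of the target change $(X, Y, Z) \mapsto (X,\, Y - a_1 X^2,\, Z - a_2 X^2)$, where $a_i$ is the $x^2$-coefficient of $Q_i$, in order to kill both $x^2$ terms. Thus we may assume $Q_i(x,y) = y(\alpha_i x + \beta_i y)$ for $i = 1, 2$, i.e.\ both quadratic forms are divisible by $y$. At this point the residual group acting on $(\alpha_1, \beta_1, \alpha_2, \beta_2)$ consists of: (i) the target $GL(2)$ mixing $(Q_2, Q_3)$; (ii) the source rescalings $x \mapsto \mu x$ and $y \mapsto \lambda y$ combined with target rescalings of $X, Y, Z$ that restore the first coordinate to $x$; and (iii) the shear $y \mapsto y + \gamma x$ combined with a further target correction in $X^2$ that kills the $x^2$-terms this shear reintroduces. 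A short expansion shows the combined action (iii) sends $\alpha_i \mapsto \alpha_i + 2 \beta_i \gamma$ with $\beta_i$ unchanged.

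With this action in hand I would run a case analysis. If $(\beta_1, \beta_2) \neq (0, 0)$, the target $GL(2)$ lets me assume $\beta_1 \neq 0$ and $\beta_2 = 0$; rescaling $y$ gives $\beta_1 = 1$, and the shear $y \mapsto y - (\alpha_1/2) x$ kills $\alpha_1$, leaving $Q_2 = y^2$ and $Q_3 = \alpha_2 xy$. A further scaling gives $\alpha_2 \in \{0, 1\}$ and yields the orbits $(x, y^2, xy)$ and $(x, y^2, 0)$. If $\beta_1 = \beta_2 = 0$, then $Q_i = \alpha_i xy$; if some $\alpha_i \neq 0$ we reduce to $(x, xy, 0)$, otherwise to $(x, 0, 0)$.

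Finally, the four orbits are pairwise distinct: the rank of the linear span of $(Q_2, Q_3)$ is an $\mathcal{A}^2$-invariant taking values $2, 1, 1, 0$ on the four normal forms respectively, and among the two rank-$1$ orbits the discriminant of the nonzero quadratic form distinguishes $y^2$ from $xy$. The main bookkeeping difficulty lies in item (iii) of the residual action, where I must carefully combine source shears with target corrections and verify that the combined action is exactly $\alpha_i \mapsto \alpha_i + 2 \beta_i \gamma$; once this is in place, the remainder is a finite case analysis on the pair $(\alpha_1, \beta_1, \alpha_2, \beta_2) \in \mathbb{R}^4$.
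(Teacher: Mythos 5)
Your argument is essentially correct, but note that the paper does not prove this proposition at all: it is quoted from Mond's classification (\cite{mond}), and the only trace of it in the paper is the list of coefficient conditions (a)--(d) inside the proof of Theorem \ref{conditionsparabola}. Your reduction reproduces exactly those conditions: writing $Q_i=\alpha_i xy+\beta_i y^2$ after killing the $x^2$-terms, your case split on the rank of the span and on $(\beta_1,\beta_2)$ is precisely Mond's dichotomy $a_{11}b_{02}-a_{02}b_{11}\neq 0$, versus $=0$ with $a_{02}^2+b_{02}^2>0$, versus $a_{02}=b_{02}=0$ with $a_{11}^2+b_{11}^2>0$, versus all four vanishing. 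The normalization of the residual group is right (the source change must be of the form $(x,y)\mapsto(\mu x,\lambda y+\gamma x)$ so as to preserve the $1$-jet $(x,0,0)$, and your computation $\alpha_i\mapsto\alpha_i+2\beta_i\gamma$ is correct), and the case analysis lands on the four normal forms. The one imprecise point is the invariant you use to separate $(x,y^2,0)$ from $(x,xy,0)$: the ``discriminant of the nonzero quadratic form'' is not well defined, since $Q_i$ is only determined modulo $x^2$ (the target can absorb $X^2$-terms) and a shear $y\mapsto y+\gamma x$ changes the discriminant of the representative $y^2$. The clean invariant is the restriction of the pair $(Q_2,Q_3)$ to the kernel of $df(0)$ (the $y$-axis, which is preserved by all admissible source changes): it is nonzero for $(x,y^2,xy)$ and $(x,y^2,0)$ and vanishes for $(x,xy,0)$ and $(x,0,0)$; combined with the dimension of the span modulo $x^2$ this separates all four orbits. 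With that small repair your proof is complete and supplies a self-contained argument for a statement the paper only cites.
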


The next theorem is a powerful tool to distinguish the four orbits in
the previous proposition just using the curvature parabola. It shows that
the curvature parabola is a complete invariant for the Mond's $\mathcal{A}^{2}$-
classification.

\begin{teo}[\cite{MartinsBallesteros}]\label{conditionsparabola}
Let $M\subset\mathbb{R}^{3}$ be a surface with a singularity of corank $1$ at $p\in M$. We
assume for simplicity that $p$ is the origin of $\mathbb{R}^{3}$ and denote by $j^{2}f(0)$
the $2$-jet of a local parametrisation $f:(\mathbb{R}^{2},0)\rightarrow(\mathbb{R}^{3},0)$
of $M$. Then the following holds:
\begin{itemize}
\item[(i)] $\Delta_{p}$ is a non-degenerate parabola if and only if $j^{2}f(0)\sim_{\mathcal{A}^{2}}(x,y^{2},xy)$;
\item[(ii)] $\Delta_{p}$ is a half-line if and only if $j^{2}f(0)\sim_{\mathcal{A}^{2}}(x,y^{2},0)$;
\item[(iii)] $\Delta_{p}$ is a line if and only if $j^{2}f(0)\sim_{\mathcal{A}^{2}}(x,xy,0)$;
\item[(iv)] $\Delta_{p}$ is a point if and only if $j^{2}f(0)\sim_{\mathcal{A}^{2}}(x,0,0)$.
\end{itemize}
\end{teo}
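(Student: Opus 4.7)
The plan is to verify the correspondence by direct calculation with the four normal forms supplied by the preceding proposition, exploiting the fact that the curvature parabola is determined by the 2-jet of an appropriately normalised parametrisation. First, I would put $f$ in the adapted form $f(x,y)=(x,f_{2}(x,y),f_{3}(x,y))$ with $(f_{i})_{x}(0)=(f_{i})_{y}(0)=0$ for $i=2,3$, which is always possible by a source diffeomorphism together with a rigid motion of $\mathbb{R}^{3}$; this is the same normalisation used in the preliminaries to derive formula~(\ref{parabola}). In these coordinates $T_{p}M=\langle e_{1}\rangle$, $N_{p}M=\langle e_{2},e_{3}\rangle$, $E=1$, $F=G=0$, and choosing $\nu_{1}=e_{2}$, $\nu_{2}=e_{3}$ formula~(\ref{parabola}) applies verbatim. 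The 2-jet then has the form $(x,Q_{1}(x,y),Q_{2}(x,y))$, and writing $Q_{i}=\alpha_{i}x^{2}+\beta_{i}xy+\gamma_{i}y^{2}$ gives
\begin{equation*}
\eta(y)=2\bigl(\alpha_{1}+\beta_{1}y+\gamma_{1}y^{2}\bigr)\nu_{1}+2\bigl(\alpha_{2}+\beta_{2}y+\gamma_{2}y^{2}\bigr)\nu_{2}.
\end{equation*}

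Next I would read off the geometric shape of the image of $\eta$ from these coefficients, since $\eta$ is a vector-valued quadratic polynomial in $y$. A short analysis of the affine span of $\{\eta(y):y\in\mathbb{R}\}$ shows: the image is a non-degenerate parabola iff the polynomials $\beta_{1}y+\gamma_{1}y^{2}$ and $\beta_{2}y+\gamma_{2}y^{2}$ are linearly independent, equivalently $\beta_{1}\gamma_{2}-\beta_{2}\gamma_{1}\neq 0$; it is a half-line iff $\beta_{1}\gamma_{2}-\beta_{2}\gamma_{1}=0$ but $(\gamma_{1},\gamma_{2})\neq(0,0)$, so the common direction traces a half-line through its vertex; it is a full line iff $\gamma_{1}=\gamma_{2}=0$ but $(\beta_{1},\beta_{2})\neq(0,0)$; and it is a single point iff $\beta_{i}=\gamma_{i}=0$ for $i=1,2$. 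These four cases are mutually exclusive and exhaustive.

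Substituting each of the four representatives $(x,y^{2},xy)$, $(x,y^{2},0)$, $(x,xy,0)$, $(x,0,0)$ into $\eta$ yields respectively $2y^{2}\nu_{1}+2y\nu_{2}$, $2y^{2}\nu_{1}$, $2y\nu_{1}$ and $0$, which fall into the four classes above in order. Since the preceding proposition states that every corank-$1$ 2-jet is $\mathcal{A}^{2}$-equivalent to exactly one of these four forms, and since a parabola, a half-line, a line and a point are pairwise distinct as subsets of $N_{p}M$, the induced map from $\mathcal{A}^{2}$-orbits to degeneracy types of $\Delta_{p}$ is bijective and delivers the four iff statements.

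The main technical obstacle is to ensure that the degeneracy type of $\Delta_{p}$ is genuinely an $\mathcal{A}^{2}$-invariant of $j^{2}f(0)$, rather than only of the normal-form representative used for the computation. I would handle this by checking that the subgroup of $\mathcal{A}^{2}$ preserving the form $(x,Q_{1},Q_{2})$ (source shears $y\mapsto y+cx$ and scalings of $y$, linear changes of basis in the $(\nu_{1},\nu_{2})$ plane, and subtraction of multiples of $x^{2}$ from the normal components) acts on the pair $(Q_{1}(1,y),Q_{2}(1,y))$ by affine reparametrisations of $y$, linear combinations, and shifts by constants, all of which preserve the four classes of the previous step. Combined with the fact that every $\mathcal{A}^{2}$-orbit contains a normal-form representative (by the preceding proposition), this yields the invariance and completes the proof.
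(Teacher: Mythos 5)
Your proposal is correct, and its computational core coincides with the paper's: both reduce to the adapted $2$-jet $(x,Q_{1},Q_{2})$ with vanishing linear parts in the normal components, parametrise $\Delta_{p}$ as a vector-valued quadratic $\eta(y)$, and observe that the four shapes are governed by whether $a_{11}b_{02}-a_{02}b_{11}\neq 0$, respectively $=0$ with $a_{02}^{2}+b_{02}^{2}>0$, respectively $a_{02}=b_{02}=0$ with $a_{11}^{2}+b_{11}^{2}>0$, respectively all four coefficients vanish --- exactly your conditions on the $(\beta_{i},\gamma_{i})$. The genuine difference is in how the shape is tied to the $\mathcal{A}^{2}$-orbit. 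The paper imports Mond's explicit coefficient criteria characterising each of the four orbits (his conditions (a)--(d)), so both directions of each equivalence follow immediately by comparing two lists of identical coefficient conditions. You instead evaluate $\eta$ only on the four normal-form representatives and then supply the missing ingredient yourself, namely that the degeneracy type of $\Delta_{p}$ is constant on each $\mathcal{A}^{2}$-orbit, via an analysis of the changes of coordinates relating two adapted representatives of the same $2$-jet. Your route is self-contained modulo the bare orbit list (it does not require Mond's coefficient characterisation), at the price of the invariance argument, which is the one point requiring care: one must confirm that every such change acts on the pair $(Q_{1}(1,y),Q_{2}(1,y))$ by an affine reparametrisation of $y$ together with an invertible affine transformation of the normal plane, and your list of generators is the correct one, so the check goes through. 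Conversely, the paper's version is shorter but leans on an external classification result; yours makes the invariance explicit, which is arguably more transparent.
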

\begin{proof}
For completeness we give a sketch of the proof. Without loss of generality, assume that
$$j^{2}f(0)=\left(x,\frac{1}{2}(a_{20}x^{2}+2a_{11}xy+a_{02}y^{2}),\frac{1}{2}(b_{20}x^{2}+2b_{11}xy+b_{02}y^{2})\right).$$
Let $\{e_{1},e_{2},e_{3}\}$ be the standard basis of $\mathbb{R}^{3}$. Hence,
$T_{p}M=[e_{1}]$ and $N_{p}M=[e_{2},e_{3}]$ and the matrix of coefficients of the
second fundamental form is
$$\left(
    \begin{array}{ccc}
      a_{20} & a_{11} & a_{02} \\
      b_{20} & b_{11} & b_{02} \\
    \end{array}
  \right).
$$
According to \cite{mond}, the classification of $j^{2}f(0)$ follows from the analysis of the coefficients
$a_{ij},b_{ij}$:
\begin{itemize}
\item[(a)] $j^{2}f(0)\sim_{\mathcal{A}^{2}}(x,y^{2},xy)$ iff $a_{11}b_{02}-a_{02}b_{11}\neq0$;
\item[(b)] $j^{2}f(0)\sim_{\mathcal{A}^{2}}(x,y^{2},0)$ iff $a_{11}b_{02}-a_{02}b_{11}=0$ and $a_{02}^{2}+b_{02}^{2}>0$;
\item[(c)] $j^{2}f(0)\sim_{\mathcal{A}^{2}}(x,xy,0)$ iff $a_{02}=b_{02}=0$ and $a_{11}^{2}+b_{11}^{2}>0$;
\item[(d)] $j^{2}f(0)\sim_{\mathcal{A}^{2}}(x,0,0)$ iff $a_{02}=b_{02}=a_{11}=b_{11}=0$.
\end{itemize}
The result now follows from comparing those conditions with the parametrisation of $\Delta_{p}$
given by $\eta(y)=(0,a_{20}+2a_{11}y+a_{02}y^{2},b_{20}+2b_{11}y+b_{02}y^{2})$.
\end{proof}

Asymptotic and binormal directions for corank $1$ surfaces in $\mathbb{R}^{3}$ are defined
in terms of the second fundamental form. However, the next results show that they are inspired by
those of a regular surface in $\mathbb{R}^{4}$, where we have the curvature ellipse in the normal
plane.

We say that a non zero tangent direction $X\in T_{q}\tilde{M}$ is \emph{asymptotic}
if there is a non zero normal vector $\nu\in N_{p}M$ such that $II_{\nu}(X,Y)=0$,
for any $Y\in T_{q}\tilde{M}$. Moreover, in such case we say that $\nu$ is a
\emph{binormal direction}.

\begin{lem}[\cite{MartinsBallesteros}]\label{asymptotic2}
Let $\{\nu_{1},\nu_{2}\}$ be an orthonormal frame of $N_{p}M$. A tangent direction
$X=a\partial_{x}+b\partial_{y}\in T_{q}\tilde{M}$ is asymptotic if and only if
$$
\left|
  \begin{array}{ccc}
    b^{2} & -ab & a^{2} \\
    l_{\nu_{1}} & m_{\nu_{1}} & n_{\nu_{1}} \\
    l_{\nu_{2}} & m_{\nu_{2}} & n_{\nu_{2}} \\
  \end{array}
\right|=0,
$$
in which $l_{\nu_{i}},m_{\nu_{i}}$ and $n_{\nu{i}}$, $i=1,2$
are the coefficients of the second fundamental form.
\end{lem}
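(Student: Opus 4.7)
The plan is to recast the asymptotic condition as a homogeneous linear system in the unknown coefficients of a putative binormal $\nu$ with respect to the orthonormal frame $\{\nu_1,\nu_2\}$, and then to match the vanishing of the resulting $2\times 2$ determinant with the $3\times 3$ determinant in the statement.

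Concretely, write an arbitrary normal vector as $\nu=\alpha\nu_1+\beta\nu_2$. By bilinearity, $II_\nu(X,Y)=\alpha\,II_{\nu_1}(X,Y)+\beta\,II_{\nu_2}(X,Y)$, so the condition $II_\nu(X,Y)=0$ for \emph{every} $Y\in T_q\tilde M$ is equivalent, by further bilinearity in $Y$, to the two scalar equations obtained by plugging $Y=\partial_x$ and $Y=\partial_y$. Using $X=a\partial_x+b\partial_y$ and the coefficients $l_{\nu_i},m_{\nu_i},n_{\nu_i}$ of the second fundamental form in the chosen frame, these two equations read
\begin{equation*}
\alpha(a\,l_{\nu_1}+b\,m_{\nu_1})+\beta(a\,l_{\nu_2}+b\,m_{\nu_2})=0,\qquad
\alpha(a\,m_{\nu_1}+b\,n_{\nu_1})+\beta(a\,m_{\nu_2}+b\,n_{\nu_2})=0.
\end{equation*}
Hence $X$ is asymptotic if and only if this homogeneous $2\times 2$ system in $(\alpha,\beta)$ admits a nontrivial solution, that is, if and only if its coefficient determinant vanishes.

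It then remains to expand that $2\times 2$ determinant and recognise it as the Laplace expansion, along the first row, of the $3\times 3$ determinant appearing in the statement. A direct expansion gives
\begin{equation*}
a^{2}(l_{\nu_1}m_{\nu_2}-l_{\nu_2}m_{\nu_1})+ab(l_{\nu_1}n_{\nu_2}-l_{\nu_2}n_{\nu_1})+b^{2}(m_{\nu_1}n_{\nu_2}-m_{\nu_2}n_{\nu_1}),
\end{equation*}
which is exactly the expansion along the first row of the determinant in the statement (the signs of the cofactors absorbing the $-ab$ in the $(1,2)$-entry). This matches the structure of Lemma \ref{asymptotic1} for immersed surfaces in $\mathbb R^4$, as expected from the analogy highlighted in the paper.

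There is no real obstacle here: the argument is a short linear-algebra translation, and the only thing to be careful with is the sign bookkeeping when expanding the $3\times 3$ determinant (in particular the $-ab$ entry, whose sign cancels against the cofactor sign so that the coefficient of $ab$ comes out as $l_{\nu_1}n_{\nu_2}-l_{\nu_2}n_{\nu_1}$, not its negative). Once this is checked, both directions of the equivalence follow simultaneously from the elementary fact that a $2\times 2$ homogeneous system has a nontrivial solution iff its determinant is zero.
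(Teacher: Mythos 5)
Your argument is correct: writing $\nu=\alpha\nu_1+\beta\nu_2$, reducing ``$II_\nu(X,Y)=0$ for all $Y$'' to the two equations with $Y=\partial_x,\partial_y$ by bilinearity, and identifying the vanishing of the resulting $2\times2$ determinant with the stated $3\times3$ determinant (the sign check on the $-ab$ entry is right) is exactly the standard proof. Note that the paper itself states this lemma as a citation from Martins--Nu\~no-Ballesteros and gives no proof, so there is nothing internal to compare against; your derivation fills that in correctly and mirrors the analogous computation behind Lemma~\ref{asymptotic1}.
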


We can choose local coordinates for $\tilde{M}$ such that the curvature parabola is parametrised
in the normal plane by $\eta$, as given in (\ref{parabola}). The parameter value $y\in\mathbb{R}$
corresponds to a unit tangent direction $X =\partial_{x}+y\partial_{y}\in C_{q}$. We denote by
$y_{\infty}$ the parameter value corresponding to the null tangent direction $X =\partial_{y}$.
In the case that $\Delta_{p}$ degenerates to a line or a half-line, we define
$\eta(y_{\infty})=\eta'(y_{\infty})=\eta'(y)/|\eta'(y)|$, where $y>0$ is any value such that
$\eta'(y)\neq0$. In the case that $\Delta_{p}$ degenerates to a point $\nu$, then we define
$\eta(y_{\infty})=\nu$ and $\eta'(y_{\infty})=0$. In the case that $\Delta_{p}$ is a
non-degenerate parabola, $\eta(y_{\infty})$ and $\eta'(y_{\infty})$ are not defined.

\begin{lem}[\cite{MartinsBallesteros}]
A tangent direction in $T_{q}M$ given by a parameter value $y\in\mathbb{R}\cup[y_{\infty}]$ is
asymptotic if and only if $\eta(y)$ and $\eta'(y)$ are collinear (provided they are defined).
\end{lem}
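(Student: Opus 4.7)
The plan is to translate both conditions --- the asymptoticity of the tangent direction associated with $y$, and the collinearity of $\eta(y)$ and $\eta'(y)$ --- into polynomial conditions on the coefficients $l_{\nu_i}, m_{\nu_i}, n_{\nu_i}$ and check that they agree.

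For $y\in\mathbb{R}$ the associated tangent direction is $X=\partial_x+y\partial_y$, so Lemma \ref{asymptotic2} applies with $(a,b)=(1,y)$. Expanding the $3\times 3$ determinant along the first row gives
$$(m_{\nu_1}n_{\nu_2}-n_{\nu_1}m_{\nu_2})\,y^2+(l_{\nu_1}n_{\nu_2}-n_{\nu_1}l_{\nu_2})\,y+(l_{\nu_1}m_{\nu_2}-m_{\nu_1}l_{\nu_2}).$$
On the other hand, $\eta(y)$ and $\eta'(y)$ are collinear in $N_pM\simeq\mathbb{R}^2$ iff the $2\times 2$ determinant of their coordinates with respect to $\{\nu_1,\nu_2\}$ vanishes. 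A direct computation from the parametrisation (\ref{parabola}) shows that this determinant equals exactly twice the polynomial above (the would-be cubic term cancels), which settles the equivalence for every $y\in\mathbb{R}$.

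For $y_\infty$ the tangent direction is $\partial_y$, i.e.\ $(a,b)=(0,1)$, and Lemma \ref{asymptotic2} reduces to the single condition $m_{\nu_1}n_{\nu_2}-n_{\nu_1}m_{\nu_2}=0$. I then split into the four cases of Theorem \ref{conditionsparabola}. In the non-degenerate parabola case, condition (a) in its proof gives $m_{\nu_1}n_{\nu_2}-n_{\nu_1}m_{\nu_2}\neq 0$, so $\partial_y$ is not asymptotic; on the other hand $\eta(y_\infty),\eta'(y_\infty)$ are not defined, so the lemma makes no claim and there is nothing to prove. In the line, half-line and point cases, the corresponding conditions (b), (c), (d) all force $m_{\nu_1}n_{\nu_2}-n_{\nu_1}m_{\nu_2}=0$, so $\partial_y$ is asymptotic; and by the conventions defining $\eta(y_\infty),\eta'(y_\infty)$ preceding the lemma these vectors are automatically collinear (both parallel to the common direction of $\eta'(y)$ in the line and half-line cases, and $\eta'(y_\infty)=0$ in the point case).

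The main obstacle is the bookkeeping at $y_\infty$: one has to check carefully that the ad hoc conventions defining $\eta(y_\infty)$ and $\eta'(y_\infty)$ in the degenerate cases are consistent with identifying the parameter value $y_\infty$ with the direction $\partial_y$ in Lemma \ref{asymptotic2}, and to read off from the proof of Theorem \ref{conditionsparabola} the precise algebraic characterisation of each of the four Mond $\mathcal{A}^2$-orbits in terms of the coefficients $l_{\nu_i},m_{\nu_i},n_{\nu_i}$. Once these identifications are pinned down, both the finite-$y$ case and the $y_\infty$ case reduce to routine polynomial manipulations.
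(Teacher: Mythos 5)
The paper does not prove this lemma; it is quoted directly from \cite{MartinsBallesteros}, so there is no in-paper argument to compare against. Your verification is correct: with $(a,b)=(1,y)$ the determinant of Lemma \ref{asymptotic2} is $(m_{\nu_1}n_{\nu_2}-n_{\nu_1}m_{\nu_2})y^2+(l_{\nu_1}n_{\nu_2}-n_{\nu_1}l_{\nu_2})y+(l_{\nu_1}m_{\nu_2}-m_{\nu_1}l_{\nu_2})$, and expanding $\det(\eta(y),\eta'(y))$ from the parametrisation (\ref{parabola}) does give exactly twice this polynomial, with the cubic term cancelling; and your case analysis at $y_\infty$ correctly observes that in all three degenerate cases the condition $m_{\nu_1}n_{\nu_2}-n_{\nu_1}m_{\nu_2}=0$ holds automatically while the ad hoc conventions make $\eta(y_\infty)$ and $\eta'(y_\infty)$ collinear by fiat, so both sides of the equivalence are vacuously or trivially satisfied there, and in the non-degenerate case the proviso ``provided they are defined'' excludes $y_\infty$ from consideration. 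This is the natural computation one would expect behind the cited result, and I see no gap.
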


The parameter $y\in\mathbb{R}\cup[y_{\infty}]$ corresponding to an asymptotic direction
$X\in T_{q}\tilde{M}$ is also called an asymptotic direction. It is possible to study
the asymptotic directions $y\in\mathbb{R}\cup[y_{\infty}]$ by studing each type of
curvature parabola.
\begin{itemize}
\item[(i)] If $\Delta_{p}$ is a non-degenerate parabola, we have $0,1$ or $2$ asymptotic
directions, acordind to the position of $p$: outside, on or outside the parabola,
repectively;
\item[(ii)] If $\Delta_{p}$ is a half-line, or we have two asymptotic directions,
$[y_{\nu},y_{\infty}]$, with $\eta(y_{\nu})$ being the vertex of $\Delta_{p}$ or
every $y\in\mathbb{R}\cup[y_{\infty}]$ is an asymptotic direction, according to
the line containing $\Delta_{p}$ does not pass through $p$ or it does, respectively;
\item[(iii)] If $\Delta_{p}$ is a line then either $y_{\infty}$ is the only
asymptotic direction or every $y\in\mathbb{R}\cup[y_{\infty}]$ is an asymptotic
direction, according to the line does not contain $p$ or it does, respectively.
\item[(iv)] If $\Delta_{p}$ is a point, every $y\in\mathbb{R}\cup[y_{\infty}]$
is an asymptotic direction.
\end{itemize}

In \cite{MartinsBallesteros} is shown that the height function $h_{v}:U\rightarrow\mathbb{R}$
given by $h_{v}(x,y)=\langle f(x,y),v\rangle$, where $f:U\subset\mathbb{R}^{2}\rightarrow\mathbb{R}$
is a local parametrisation of $M$ at $p$ and $v\in S^{2}$ is singular
at $p\in M$ if and only if $v\in N_{p}M$. Moreover, the singularity is degenerate
if and only if $v\in N_{p}M$ is a binormal direction (for $\Delta_{p}$ not being a point).
When $\Delta_{p}$ is a point, the singularity is degenerate for all $v\in N_{p}M$.

%
%

%

\section{Relation amongst the curvature loci}

In this section we relate the geometry of an immersed surface $N$ in $\mathbb R^4$ at a point $p$ to the geometry of the projection of the
surface to $\mathbb R^3$ at the singular point. In particular we relate the curvature ellipse of the surface to the curvature parabola of its singular projection.

We may assume that $p$ is the origin in $\mathbb R^4$ and that the parametrisation of $N$ is in Monge form so that $X(x,y)=(x,y,f_1(x,y),f_2(x,y))$ such that $f_1,f_2$ don't have constant or
linear part. Given a direction $\bf v\in S^3$, the orthogonal projection in $\mathbb R^4$ in the direction of $\bf v$ is given by $\pi_{\bf v}:\mathbb R^4 \rightarrow \mathbb R^3$ with $$\pi_{\bf v}(p)=p-\langle p, \bf v\rangle{\bf v}.$$ We want to study projection along tangent directions so without loss of generality we can assume by rotation in the tangent plane that ${\bf v}=(0,1)\in T_pN$, since rotation in the tangent plane leaves
invariant the curvature ellipse. Therefore, a parametrisation for $\pi_{\bf v}(N)$ is given by $f(x,y)=(x,f_1(x,y),f_2(x,y))$.

\begin{prop}\label{asymptotic}
Let $N\subset\mathbb{R}^{4}$ be a regular surface and $\pi_{\bf v}(N)$
its projection along the direction ${\bf v}=(0,1)\in T_pN$.
\begin{itemize}
\item[(i)] The number of asymptotic directions of $N$ at $p$ and of $\pi_{\bf v}(N)$
at $\pi_{\bf v}(p)$ is equal.
\item[(ii)] The number of binormal directions are equal on both surfaces.
Moreover, if $v\in N_{p}N$ is a binormal direction of $N$, it is also a binormal
direction of $\pi_{\bf v}(N)$ at $\pi_{\bf v}(p)$.
\end{itemize}
\end{prop}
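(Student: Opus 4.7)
The plan is to exploit the special Monge parametrisation fixed just before the statement: writing $X(x,y)=(x,y,f_1(x,y),f_2(x,y))$ with $f_1,f_2$ containing no constant or linear terms, the orthogonal projection along ${\bf v}=(0,1)\in T_pN$ is literally the map $(x,y,f_1,f_2)\mapsto(x,f_1,f_2)$. The first thing I would do is verify that the coefficient matrices of the two second fundamental forms are identical. Since the $f_i$ have no linear part, the frame $\{X_x,X_y,e_3,e_4\}$ at the origin is the standard basis, so the coefficients of the second fundamental form of $N$ at $p$ are $l_i=(f_i)_{xx}(0)$, $m_i=(f_i)_{xy}(0)$, $n_i=(f_i)_{yy}(0)$. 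On the other side, the parametrisation $f(x,y)=(x,f_1,f_2)$ of $\pi_{\bf v}(N)$ automatically satisfies $E=1$, $F=G=0$ at the origin, matching the normalised form used in Section~\ref{section-notation}; taking the orthonormal normal frame $\nu_1=(0,1,0)$, $\nu_2=(0,0,1)$, the coefficients $l_{\nu_i}$, $m_{\nu_i}$, $n_{\nu_i}$ are the same second partial derivatives of $f_i$ at $0$. Thus the two coefficient matrices agree entry by entry.

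Once this is established, part (i) is immediate: the binary differential equation in Lemma~\ref{asymptotic1} and the determinantal condition in Lemma~\ref{asymptotic2} become the same polynomial equation under the correspondence $(dx,dy)\leftrightarrow(a,b)$, where on the corank $1$ side the parameter $y_\infty$ corresponds to the direction $[dx{:}dy]=[0{:}1]$ on the regular side. Hence the zero sets of these two quadratic forms inside the projective line of tangent directions coincide, and in particular so do their cardinalities.

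For (ii), I would identify $v=v_3e_3+v_4e_4\in N_pN$ with $\nu=v_3\nu_1+v_4\nu_2\in N_{\pi_{\bf v}(p)}\pi_{\bf v}(N)$ (the natural isometry obtained by dropping the $e_2$ coordinate). The height function on $N$ in the direction $v$ is $h_v(x,y)=v_3f_1+v_4f_2$, whose Hessian at $0$ is degenerate iff
$$
(v_3l_1+v_4l_2)(v_3n_1+v_4n_2)-(v_3m_1+v_4m_2)^2=0.
$$
On the corank $1$ side, the second fundamental form in the direction $\nu$ has coefficients $l_\nu=v_3l_1+v_4l_2$, $m_\nu=v_3m_1+v_4m_2$, $n_\nu=v_3n_1+v_4n_2$, so $II_\nu$ is a degenerate quadratic form on $T_q\tilde{M}$ under exactly the same equation. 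By the characterisation of binormals via degenerate height functions recalled at the end of Section~\ref{section-notation}, this is precisely the binormality condition for $\pi_{\bf v}(N)$. Hence binormal directions correspond bijectively under $v\leftrightarrow\nu$, which gives both assertions of (ii).

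The main obstacle is essentially careful bookkeeping: one must verify that the natural identification of normal bundles makes all the algebraic objects (coefficient matrices, height functions, asymptotic equations) correspond, and that the degenerate cases where $\Delta_p$ is a line, half-line or point (and the parallel cases in which $\Delta_e$ collapses) are tracked consistently, including the convention that when $\Delta_p$ is a point every normal direction is binormal. Once this identification is in place, both parts reduce to the observation that two polynomial equations in the same variables with the same coefficients have the same zero set.
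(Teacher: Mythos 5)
Your proposal is correct and follows essentially the same route as the paper: part (i) rests on the observation that the second fundamental form coefficients of $N$ and $\pi_{\bf v}(N)$ coincide in the Monge set-up, so the determinantal conditions of Lemma~\ref{asymptotic1} and Lemma~\ref{asymptotic2} become the same equation, and part (ii) reduces to the coincidence of the degenerate height-function conditions under the natural identification of normal planes. The only cosmetic difference is that for (ii) the paper simply notes $\langle \pi_{\bf v}(X),\omega\rangle=\langle X,\omega\rangle$ for $\omega\perp{\bf v}$, so the two height functions are literally the same function of $(x,y)$, whereas you verify the equivalent fact by matching Hessians with the coefficients of $II_\nu$.
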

\begin{proof}
For the first statement,
the proof follows from the facts that the coefficients of the second fundamental form
from both surfaces, $N$ and $\pi_{\bf v}(N)$, are equal at the corresponding points $p$ and
$\pi_{\bf v}(p)$ and the results that characterize asymptotic directions in both cases
depend only on those coefficients: Lemma \ref{asymptotic1} and Lemma \ref{asymptotic2}.

Following \cite{OsetSinhaTari}, the height function on the projected surface $\pi_{\bf v}(N)$ along the normal vector $\omega$ is given by $$\langle \pi_{\bf v}(x,y),\omega\rangle=\langle X(x,y)-\langle X(x,y),{\bf v}\rangle {\bf v},\omega\rangle=\langle X(x,y),\omega\rangle,$$ which is precisely the height function on $N$ along the vector $\omega$. Since binormal directions, both in $\mathbb R^4$ and $\mathbb R^3$, are given by the degenerate singularities of the height function, we get part ii).
\end{proof}

In the same way as for surfaces in $\mathbb R^4$, in \cite[Definition 2.1]{OsetSinhaTari} the second author and Tari classify singular points of corank 1 surfaces in $\mathbb R^3$ according to the $\mathcal G$-orbit of the pair $(Q_{1},Q_{2})$ where $Q_{1}(x,y)=j^{2}f_{1}(x,y)$ and $Q_{2}(x,y)=j^{2}f_{2}(x,y)$. They prove that a point $p\in N\subset \mathbb R^4$ is an elliptic/hyperbolic/parabolic/inflection point if and only if $\pi_{\bf v}(p)$ is of elliptic/hyperbolic/parabolic/inflection type (\cite[Theorem 3.3]{OsetSinhaTari}).

Proposition \ref{asymptotic} leads us to make the following

\begin{definition}\label{classr3}
Given a surface $M\subset\mathbb{R}^{3}$ with corank $1$ singularity at $p\in M$.
The point $p$ is called:
\begin{itemize}
\item[(i)] elliptic, if there are no asymptotic directions at $p$;
\item[(ii)] hyperbolic, if there are two asymptotic directions at $p$;
\item[(iii)] parabolic, if there is one asymptotic direction at $p$;
\item[(iv)] an inflection point, if there are infinite asymptotic directions at $p$.
\end{itemize}
\end{definition}

With this definition

\begin{teo}
Let $\bf v$ be a tangent direction at $p\in N\subset \mathbb R^4$. The point $p$ is an elliptic/hyperbolic/parabolic/inflection point if and only if the singular point $\pi_{\bf v}(p)\in \pi_{\bf v}(N)\subset \mathbb R^3$ is an elliptic/hyperbolic/parabolic/inflection point, respectively.
\end{teo}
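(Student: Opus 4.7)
The plan is to reduce the theorem to a direct count of asymptotic directions, exploiting the fact that both sides of the asserted equivalence are determined by this single number. On the $\mathbb{R}^3$ side nothing more is needed: Definition \ref{classr3} is \emph{precisely} the statement that $\pi_{\mathbf v}(p)$ is elliptic, parabolic, hyperbolic, or an inflection point according to whether it carries $0$, $1$, $2$, or infinitely many asymptotic directions. On the $\mathbb{R}^4$ side, the analogous translation is already recorded in the excerpt: after Lemma \ref{asymptotic1} it is observed that the discriminant of the binary differential equation (\ref{BDE}) coincides with Little's resultant $\Delta$, so $p$ has $0$, $1$, $2$, or infinitely many asymptotic directions according as it is elliptic, parabolic, hyperbolic (or semiumbilic), or an inflection point.

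First I would quote these two characterisations verbatim, to make the proof a pure bookkeeping step. Then the theorem would follow at once from Proposition \ref{asymptotic}(i): since the coefficients of the second fundamental form of $N$ at $p$ and of $\pi_{\mathbf v}(N)$ at $\pi_{\mathbf v}(p)$ agree, the number of asymptotic directions is preserved under $\pi_{\mathbf v}$. Case by case: $0$ directions at $p$ forces $0$ directions at $\pi_{\mathbf v}(p)$, giving the elliptic iff elliptic equivalence; $1$ direction gives parabolic iff parabolic; $2$ directions gives hyperbolic iff hyperbolic; and infinitely many (equivalently, $\mathrm{rank}\,\alpha<2$) gives inflection iff inflection. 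Both implications are handled symmetrically because the same count characterises both classifications.

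The one subtlety worth flagging is the semiumbilic case in $\mathbb{R}^4$: a semiumbilic point has two asymptotic directions and is therefore lumped with the hyperbolic class under Definition \ref{classr3} after projection. This is consistent with how the statement of the theorem is phrased, since ``hyperbolic'' is being used in the broad sense of ``two asymptotic directions'' on both sides; a brief sentence in the write-up should make this reading explicit. As an independent sanity check, the equivalence can be compared with \cite[Theorem 3.3]{OsetSinhaTari}, where the same iff is obtained through the $\mathcal G$-orbit of the pair $(Q_1,Q_2)$; consistency of the two viewpoints is exactly what the present theorem, together with Definition \ref{classr3}, asserts.

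I do not anticipate a genuine obstacle: once Proposition \ref{asymptotic}(i) is in hand, the argument is a one-line matching. The only place one has to be careful is the distinction inflection vs.\ semiumbilic, where the count of asymptotic directions is supplemented by the rank of the matrix $\alpha$ of the second fundamental form; both the $\mathbb{R}^4$ criterion after Lemma \ref{asymptotic1} and the $\mathbb{R}^3$ criterion via the list in Theorem \ref{conditionsparabola} see this through the same coefficients, so the matching survives this refinement as well.
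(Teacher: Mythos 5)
Your argument is exactly the paper's proof: the theorem is reduced to Proposition \ref{asymptotic}(i) (invariance of the number of asymptotic directions under the projection) combined with Definition \ref{classr3} on the $\mathbb{R}^3$ side and the discriminant-of-(\ref{BDE})-equals-$\Delta$ count on the $\mathbb{R}^4$ side. Your extra remarks on the semiumbilic case and the rank of $\alpha$ only make explicit what the paper leaves implicit; the approach is correct and identical.
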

\begin{proof}
Follows directly from i) in Proposition \ref{asymptotic} and Definition \ref{classr3}.
\end{proof}

\begin{coro}
Definition 2.1 in \cite{OsetSinhaTari} and Definition \ref{classr3} coincide.
\end{coro}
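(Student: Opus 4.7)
The plan is to reduce the statement to the two known classification results by observing that \emph{every} corank $1$ singular point in $\mathbb{R}^{3}$ is (up to $\mathcal{A}^{2}$-equivalence) the tangent projection of some regular surface in $\mathbb{R}^{4}$, and then invoking Theorem~3.3 of \cite{OsetSinhaTari} together with the immediately preceding theorem.

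First I would verify that both classifications are $\mathcal{A}^{2}$-invariant, hence depend only on $j^{2}f(0)$. Definition~2.1 in \cite{OsetSinhaTari} is $\mathcal{G}$-invariant by construction. For Definition~\ref{classr3}, the number of asymptotic directions is a geometric invariant of the curvature parabola, and by Theorem~\ref{conditionsparabola} (and the case analysis that follows it) this number depends only on the $\mathcal{A}^{2}$-orbit of $j^{2}f(0)$. Thus it suffices to check that the two definitions agree on a single representative of each corank $1$ $\mathcal{A}^{2}$-orbit.

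Next I would put a given germ $f:(\mathbb{R}^{2},0)\to(\mathbb{R}^{3},0)$ in normal form. Since $f$ has corank $1$ at the origin, after a change of coordinates in source and target we may assume $f(x,y)=(x,f_{1}(x,y),f_{2}(x,y))$ with $f_{1},f_{2}$ having no constant or linear part. The regular surface $N\subset\mathbb{R}^{4}$ parametrised in Monge form by $X(x,y)=(x,y,f_{1}(x,y),f_{2}(x,y))$ then projects onto $M=f(U)$ along the tangent direction ${\bf v}=(0,1)\in T_{p}N$, exactly in the setup of Section~3. Consequently, the singular point $\pi_{\bf v}(p)$ is $\mathcal{A}^{2}$-equivalent to the given $f$ at $0$.

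Finally, applying \cite[Theorem~3.3]{OsetSinhaTari} the type of $p\in N$ (elliptic/hyperbolic/parabolic/inflection) coincides with the type of $\pi_{\bf v}(p)$ in the sense of \cite[Definition~2.1]{OsetSinhaTari}, while by the theorem just proved it coincides with the type of $\pi_{\bf v}(p)$ in the sense of Definition~\ref{classr3}. Hence the two types assigned to $\pi_{\bf v}(p)$ agree, which by $\mathcal{A}^{2}$-invariance gives the agreement of the two definitions on \emph{every} corank $1$ singular point. The only potential obstacle is the realisation step, but the Monge-form reduction above makes it immediate.
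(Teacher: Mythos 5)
Your proposal follows essentially the route the paper intends: the corollary is meant to fall out of combining the theorem just proved (types match under projection, in the sense of Definition~\ref{classr3}) with \cite[Theorem~3.3]{OsetSinhaTari} (types match under projection, in the sense of their Definition~2.1), plus the observation that every corank~$1$ germ arises as such a projection. You make the realisation step explicit via the Monge-form lift $X(x,y)=(x,y,f_1,f_2)$, which the paper leaves implicit, and that is a worthwhile addition since without it the two theorems only give agreement on points that happen to be tangent projections.

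One claim in your first paragraph is wrong, though it turns out to be harmless: the number of asymptotic directions does \emph{not} depend only on the $\mathcal{A}^{2}$-orbit of $j^{2}f(0)$. All cross-caps have $2$-jet equivalent to $(x,y^{2},xy)$, yet an elliptic, hyperbolic or parabolic cross-cap has $0$, $2$ or $1$ asymptotic directions respectively (Theorem~\ref{conditionsparabola} only pins down the degeneracy type of $\Delta_p$, not the position of $\pi_{\bf v}(p)$ relative to it). Consequently the reduction ``it suffices to check one representative of each $\mathcal{A}^{2}$-orbit'' is invalid: there are four $\mathcal{A}^{2}$-orbits but six $\mathcal{G}$-classes to distinguish. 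Fortunately you never use this reduction --- your second and third paragraphs treat an arbitrary germ directly, needing only invariance of both classifications under source diffeomorphisms and target rotations (which holds, since both are determined by the $\mathcal{G}$-orbit of $(Q_1,Q_2)$, respectively by the intrinsically defined second fundamental form). Deleting or correcting that sentence leaves a sound proof.
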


\begin{rem}
The cross-cap is a corank $1$ singularity whose geometry has been widely studied.
It is shown in \cite{West} that, by a suitable change of coordinates in the source and
an affine coordinate chance in the target, we can parametrisate a cross-cap in
the form $f(x,y)=(x,xy+p(y),y^{2}+ax^{2}+q(x,y))$, with $p\in\mathcal{M}^{4}$
and $q\in\mathcal{M}_{2}^{3}$. The cross-cap is called hyperbolic, elliptic or parabolic
if $a<0$, $a>0$ or $a=0$, respectively. In \cite{BallesterosTari} and \cite{OsetSinhaTari}, the authors show that
a cross-cap is hyperbolic, elliptic or parabolic if and only if its singular point
is elliptic, hyperbolic or parabolic respectively. This result is explained by our previous definition, since at a hyperbolic
cross-cap there are no asymptotic directions, an elliptic cross-cap has two and a parabolic cross-cap
has only one.
\end{rem}

We need a characterisation for when the curvature ellipse is degenerate.

\begin{prop}\label{degellipse}
The curvature ellipse $\Delta_e$ degenerates (to a segment or a point) if and only if $$(a_{20}b_{11}-b_{20}a_{11})+(a_{11}b_{02}-a_{02}b_{11})=0.$$
\end{prop}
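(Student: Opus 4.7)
My approach is to rewrite the parametrisation $\eta(\theta)$ of the curvature ellipse given in (\ref{ellipse}) using the double-angle identities
$\cos^{2}\theta=\frac{1+\cos 2\theta}{2}$, $\sin^{2}\theta=\frac{1-\cos 2\theta}{2}$, $2\sin\theta\cos\theta=\sin 2\theta$.
With the Monge form convention in the paper, the coefficients of the second fundamental form at the origin are $l_{1}=a_{20}$, $m_{1}=a_{11}$, $n_{1}=a_{02}$ for the $e_{3}$-component and $l_{2}=b_{20}$, $m_{2}=b_{11}$, $n_{2}=b_{02}$ for the $e_{4}$-component. Substituting, the ellipse is parametrised as $\eta(\theta)=H+\cos(2\theta)\,P+\sin(2\theta)\,Q$, where
\[
H=\left(\tfrac{a_{20}+a_{02}}{2},\tfrac{b_{20}+b_{02}}{2}\right),\qquad P=\left(\tfrac{a_{20}-a_{02}}{2},\tfrac{b_{20}-b_{02}}{2}\right),\qquad Q=(a_{11},b_{11}),
\]
expressed in the basis $\{e_{3},e_{4}\}$.

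The key observation is that $\Delta_{e}$ is now visibly the image of the unit circle (parametrised by $2\theta$) under the affine map $\Phi:\mathbb{R}^{2}\to N_{p}N$, $\Phi(u,v)=H+uP+vQ$. Such an image is either a non-degenerate ellipse (when $\{P,Q\}$ is linearly independent) or it degenerates to a line segment or a point (exactly when $\{P,Q\}$ is linearly dependent).

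The final step is to compute the determinant
\[
\det\begin{pmatrix}\tfrac{a_{20}-a_{02}}{2} & a_{11} \\ \tfrac{b_{20}-b_{02}}{2} & b_{11}\end{pmatrix}=\tfrac{1}{2}\bigl[(a_{20}-a_{02})b_{11}-(b_{20}-b_{02})a_{11}\bigr],
\]
and expand it as $\tfrac{1}{2}\bigl[(a_{20}b_{11}-b_{20}a_{11})+(a_{11}b_{02}-a_{02}b_{11})\bigr]$. Setting this equal to zero yields exactly the stated condition. I do not expect any significant obstacle: the only subtle point is recognising that the ellipse is $\mathcal{G}$-equivalent to the image of the unit circle under the affine map $\Phi$, so that linear dependence of $P$ and $Q$ is the precise degeneracy criterion. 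The rest is arithmetic.
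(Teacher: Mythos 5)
Your proof is correct, and it takes a slightly different route from the paper's. The paper argues that the ellipse degenerates if and only if the curvature $\kappa$ of the parametrised curve $\eta_e$ vanishes identically, and then reduces this to a determinant identity by direct calculation (the determinant criterion is written there as $\det(\eta_e,\eta_e')=0$, apparently a slip for $\det(\eta_e',\eta_e'')=0$). You instead pass to double angles and exhibit $\Delta_e$ as the image of the unit circle under the affine map $\Phi(u,v)=H+uP+vQ$, so that degeneracy is exactly linear dependence of $P$ and $Q$; computing $\det(P\,|\,Q)$ gives the stated condition, and indeed $\det(\eta_e',\eta_e'')=8\det(P\,|\,Q)$, so the two computations agree. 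Your version is arguably cleaner: it avoids the curvature computation, makes transparent why the degenerate cases are precisely a segment ($P,Q$ dependent, not both zero) or a point ($P=Q=0$), and is the classical (Little) picture of the curvature ellipse as an affine circle image. Two cosmetic remarks: the phrase ``$\mathcal G$-equivalent to'' is unnecessary --- $\Delta_e$ literally \emph{is} the image of the circle under $\Phi$, traversed twice via the parameter $2\theta$; and your normalisation of the coefficients ($l_1=a_{20}$, $m_1=a_{11}$, $n_1=a_{02}$) differs by harmless constant factors from the one used in Section 3 of the paper ($\eta_e=2(a_{20}\cos^2\theta+a_{11}\sin\theta\cos\theta+a_{02}\sin^2\theta,\dots)$), which does not affect the vanishing condition.
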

\begin{proof}
The curvature ellipse is parametrised by
$\eta_e(\theta)=2(a_{20}\cos(\theta)^2+a_{11}\sin(\theta)\cos(\theta)+a_{02}\sin(\theta)^2,b_{20}\cos(\theta)^2+b_{11}\sin(\theta)\cos(\theta)+b_{02}\sin(\theta)^2).$
The ellipse is degenerate if and only if $\kappa=0$, where $\kappa$ is the curvature of $\eta_e$ seen as a plane curve. Now $\kappa=0$ if and only if
$\det(\eta_e,\eta_e')=0$ and a direct calculation shows that this is equivalent to $(a_{20}b_{11}-b_{20}a_{11})+(a_{11}b_{02}-a_{02}b_{11})=0.$
\end{proof}


\begin{figure}
\begin{center}
\includegraphics[width=1\linewidth]{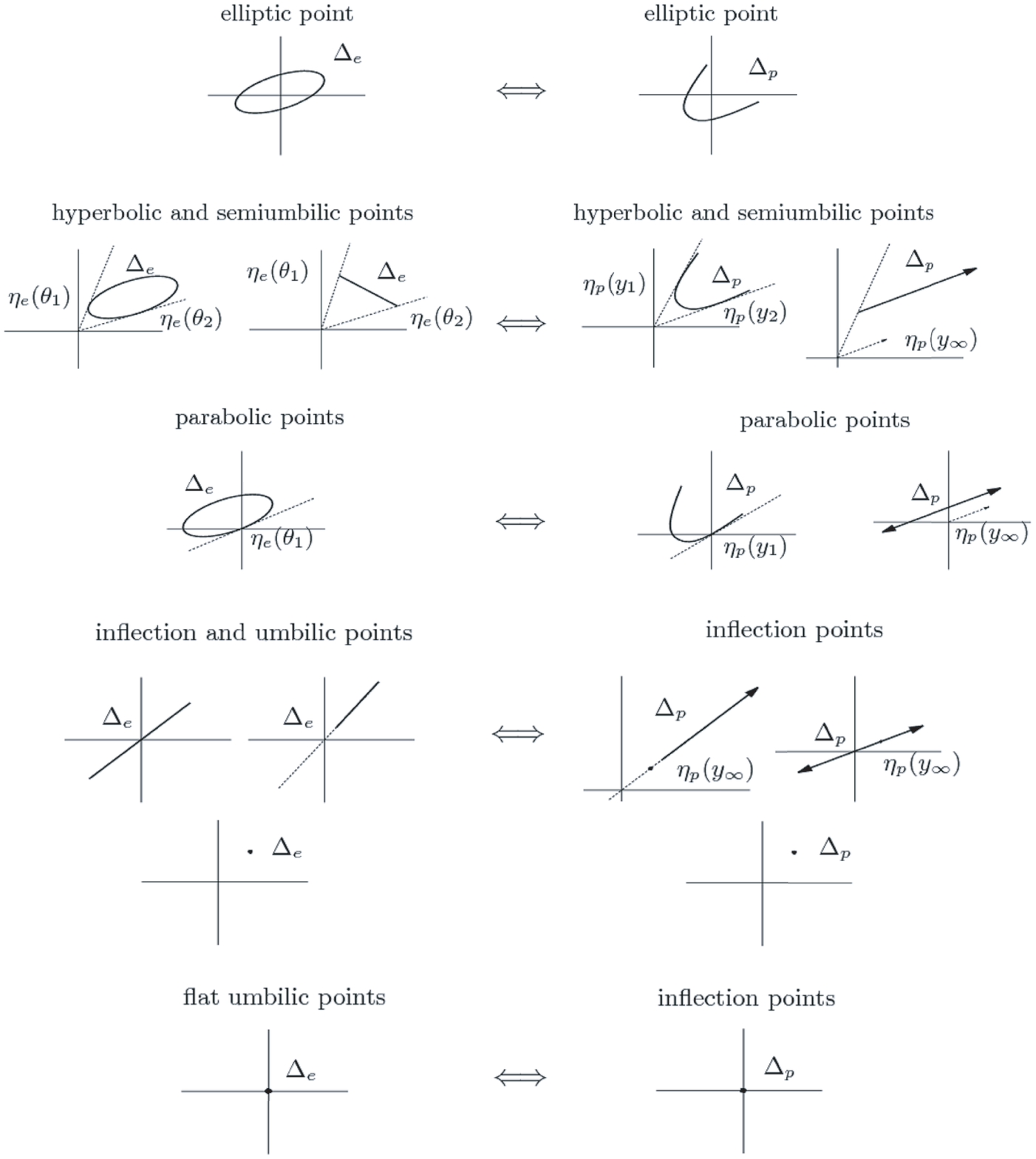}
\caption{Theorem \ref{main}.}
\label{ccc}
\end{center}
\end{figure}

A weaker version of Theorem \ref{main} can be deduced from Theorem \ref{projection}, i) in Theorem \ref{conditionsparabola} and i) in Proposition \ref{asymptotic}. However, we give a different proof here which gives further details into the geometry that will be used later on.

\begin{proof} (Proof of Theorem \ref{main})

First notice that by Theorem \ref{projection} if $\bf v$ is not an asymptotic direction, then $\pi_{\bf v}(N)$ has a cross-cap singularity at $\pi_{\bf v}(p)$
and so, by Theorem \ref{conditionsparabola}, the curvature parabola is non degenerate. If $\bf v$ is an asymptotic direction, then the singularity of the projection can be more degenerate than a
cross-cap and the curvature parabola is degenerate.

Notice too that by Proposition \ref{asymptotic} the number
of asymptotic directions at $p$ (0, 1, 2 or infinte) is the same as that of $\pi_{\bf v}(p)$.

i) By definition $\Delta_e$ is an ellipse with $p$ inside if $p$ is an elliptic point. In this case there are no asymptotic directions, so $\bf v$ is not
asymptotic and $\pi_{\bf v}(N)$ has a cross-cap singularity at $\pi_{\bf v}(p)$ and so the curvature parabola is non degenerate. Now, there are no asymptotic
directions at $\pi_{\bf v}(p)$ (it is an elliptic point) if and only if this point lies inside the curvature parabola $\Delta_p$.

ii) By definition $\Delta_e$ is an ellipse with $p$ lying outside it or a segment whose line does not contain $p$ if $p$ is a hyperbolic (possibly semiumbilic
but not inflection) point. In this case there are exactly 2 different asymptotic directions. By the previous section, $p$ is hyperbolic if and only if
$\Delta(p)=4(a_{20}b_{11}-b_{20}a_{11})(a_{11}b_{02}-a_{02}b_{11})-(a_{20}b_{02}-b_{20}a_{02})^2<0$. There are two possibilities, if
$a_{11}b_{02}-a_{02}b_{11}\neq 0$, by the proof of Theorem \ref{conditionsparabola} this condition holds if and only if $\Delta_p$ is a non-degenerate parabola.
Since there must be two asymptotic directions, the point $\pi_{\bf v}(p)$ must lie outside the parabola. This happens when $\bf v$ is not an asymptotic
direction.

On the other hand, if $a_{11}b_{02}-a_{02}b_{11}=0$, then $a_{20}b_{02}-b_{20}a_{02}\neq 0$ in order for $\Delta(p)$ to be negative, and so
$a_{02}^2+b_{02}^2>0$. This condition is equivalent by Theorem \ref{conditionsparabola} to $\Delta_p$ being a half line, and since there must be 2 asymptotic
directions, the line which contains this half-line does not contain the point $\pi_{\bf v}(p)$. In this case $\bf v$ is an asymptotic direction.

Reciprocally, if $\Delta_p$ is a parabola with $\pi_{\bf v}(p)$ lying outside it or a half-line whose line does not contain $\pi_{\bf v}(p)$, then there are
exactly 2 asymptotic directions and therefore $p$ must be a hyperbolic point (possibly semiumbilic but not inflection).

iii) By definition $\Delta_e$ is an ellipse with $p$ lying on it if $p$ is a parabolic (not inflection) point, i.e. there is only 1 asymptotic direction. These
points are characterized by $\Delta(p)=4(a_{20}b_{11}-b_{20}a_{11})(a_{11}b_{02}-a_{02}b_{11})-(a_{20}b_{02}-b_{20}a_{02})^2=0$. Again we study two cases, if
$a_{11}b_{02}-a_{02}b_{11}\neq 0$ then $\Delta_p$ is a non-degenerate parabola and since there is only 1 asymptotic direction the point $\pi_{\bf v}(p)$ must lie
on the parabola. Here $\bf v$ is not an asymptotic direction.

On the other hand, if $a_{11}b_{02}-a_{02}b_{11}=0$ then $a_{20}b_{11}-b_{20}a_{11}\neq 0$. This is because by Proposition \ref{degellipse} $\Delta_e$ is
non-degenerate if and only if $(a_{20}b_{11}-b_{20}a_{11})+(a_{11}b_{02}-a_{02}b_{11})\neq 0.$ Therefore $a_{11}^2+b_{11}^2>0$. Since $\Delta(p)=0$ then
$a_{20}b_{02}-b_{20}a_{02}=0$. This implies that $a_{02}=b_{02}=0$. By Theorem \ref{conditionsparabola}, $a_{11}^2+b_{11}^2>0$ and $a_{02}=b_{02}=0$ if and only
if $\Delta_p$ is a line, and since there is only 1 asymptotic direction, this line does not contain $\pi_{\bf v}(p)$. Here $\bf v$ is an asymptotic direction.

Reciprocally, if $\Delta_p$ is a parabola with $\pi_{\bf v}(p)$ lying on it or a line which does not contain $\pi_{\bf v}(p)$ then there is exactly 1 asymptotic
direction and therefore $p$ must be a parabolic (not inflection) point.

iv) By definition $\Delta_e$ is a segment whose line contains $p$ or a point different from $p$ if $p$ is an inflection (possibly umbilic but not flat umbilic)
point. In this case $\Delta(p)=0$ and since $\Delta_e$ is degenerate $(a_{20}b_{11}-b_{20}a_{11})+(a_{11}b_{02}-a_{02}b_{11})=0.$ Therefore
$a_{20}b_{11}-b_{20}a_{11}=0$, $a_{11}b_{02}-a_{02}b_{11}=0$ and $a_{20}b_{02}-b_{20}a_{02}=0$. These 3 conditions together imply that all tangent directions are
asymptotic. If $a_{02}^2+b_{02}^2>0$ $\Delta_p$ is a half-line and since all directions are asymptotic $\pi_{\bf v}(p)$ is contained in the line which contains
this half-line. If $a_{02}=b_{02}=0$ and $a_{11}^2+b_{11}^2>0$ $\Delta_p$ is a line and $p$ will be contained in it. If $a_{02}=b_{02}=a_{11}=b_{11}=0$ then
$\Delta_p$ is a point. In this case $a_{20}^2+b_{20}^2\neq 0$ because otherwise $\Delta_e=\{p\}$, so $\Delta_p$ is a point different from $\pi_{\bf v}(p)$.

v) If $\Delta_e=\{p\}$, $p$ is a flat umbilic by definition and $a_{02}=b_{02}=a_{11}=b_{11}=a_{20}=b_{20}=0$, therefore $\Delta_p=\{\pi_{\bf v}(p)\}$.
\end{proof}

\begin{ex}
\begin{enumerate}
\item[i)] Consider the surface in $\R^4$ parametrised by $(x,y,x^2+xy+y^2,x^2+2xy+y^2)$. This surface has a semiumbilic point at the origin and the
    curvature ellipse is parametrised by
    $\eta_e(\theta)=2(\cos(\theta)^2+\sin(\theta)\cos(\theta)+\cos{\theta}^2,\cos(\theta)^2+2\sin(\theta)\cos(\theta)+\cos{\theta}^2)$, which is a segment
    uniting the points $(\frac{1}{2},0)$ and $(\frac{3}{2},2)$ in the normal plane. Therefore $(0,1)$ is not an asymptotic direction. Projecting along
    $(0,1)$ yields $(x,x^2+xy+y^2,x^2+2xy+y^2)$. The curvature parabola at the origin is parametrised by $\eta_p(y)=(0,y^2+y+1,y^2+2y+1)$, which is a
    non-degenerate parabola.
\item[ii)] Consider a surface in $\R^4$ whose 2-jet is parametrised by $(x,y,x^2,x^2)$ which has an inflection point at the origin. The curvature ellipse is
    the segment which goes from $(0,0)$ to $(2,2)$. Here all tangent directions are asymptotic. If we project along $(0,1)$ we get the surface in $\R^3$
    whose 2-jet is parametrised by $(x,x^2,x^2)$. The curvature parabola in this case is the point $\{(2,2)\}$. If we project along $(1,0)$, we get
    $(y,x^2,x^2)$, whose curvature parabola is the half-line given by $\eta_p(y)=(0,y^2,y^2)$.
\item[iii)] Consider a surface in $\R^4$ whose 2-jet is parametrised by $(x,y,x^2+y^2,x^2+y^2)$ which has an umbilic (non flat) point at the origin. The
    curvature ellipse is the point $\{(2,2)\}$ and all tangent directions are asymptotic. If we project along $(0,1)$ we get a surface parametrised by
    $(x,x^2+y^2,x^2+y^2)$ whose curvature parabola is the half-line given by $\eta_p(y)=(0,y^2+1,y^2+1)$.
\end{enumerate}
\end{ex}

Next we show the relation between the parametrisations of the curvature ellipse and the curvature parabola.

\begin{prop}
Consider an immersed surface in $\R^4$ given in Monge form and
consider the curvature ellipse parametrised by $\eta_e(\theta)=$
$$2(a_{20}\cos(\theta)^2+a_{11}\sin(\theta)\cos(\theta)+a_{02}\sin(\theta)^2,b_{20}\cos(\theta)^2+b_{11}\sin(\theta)\cos(\theta)+b_{02}\sin(\theta)^2).$$
The curvature parabola of the projection along the tangent direction
$(0,1)$ is parametrised by
$$\eta_p(y)=(0,2a_{20}+2a_{11}y+2a_{02}y^2,2b_{20}+2b_{11}y+2b_{02}y^2).$$
\end{prop}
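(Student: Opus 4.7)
The statement is essentially a direct translation of the two general parametrisation formulas $(\ref{ellipse})$ and $(\ref{parabola})$ into the Monge normal form chosen at the start of Section 3, so the plan is to read off the coefficients of the second fundamental form in each case and substitute.

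First I would fix the Monge setup: $X(x,y) = (x, y, f_1(x,y), f_2(x,y))$ with $j^{2}f_1(0) = a_{20}x^{2} + a_{11}xy + a_{02}y^{2}$ and $j^{2}f_2(0) = b_{20}x^{2} + b_{11}xy + b_{02}y^{2}$ (the convention here contains no $\tfrac{1}{2}$ factor, which is forced on us by the $a_{11}\sin\theta\cos\theta$ term appearing in the statement of $\eta_e$). With the standard basis of $\mathbb{R}^{4}$, at $p=X(0)$ one has $T_{p}N = \langle e_{1},e_{2}\rangle$ and $N_{p}N = \langle e_{3},e_{4}\rangle$, and the coefficients of the second fundamental form with respect to this orthonormal frame are read off from the second partials of $f_{1},f_{2}$ at the origin: $l_{1}=2a_{20}$, $m_{1}=a_{11}$, $n_{1}=2a_{02}$ and analogously $l_{2}=2b_{20}$, $m_{2}=b_{11}$, $n_{2}=2b_{02}$. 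Substituting these into $(\ref{ellipse})$ and pulling out a common factor $2$ yields the stated parametrisation of $\eta_{e}$.

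For the parabola, projection along $\mathbf{v}=(0,1) \in T_{p}N$ yields $f(x,y)=(x,f_{1}(x,y),f_{2}(x,y))$ as a local parametrisation of $\pi_{\mathbf{v}}(N)$ at $\pi_{\mathbf{v}}(p)$. Since $f_{1},f_{2}$ have no constant or linear parts, $f_{x}(0)=(1,0,0)$ and $f_{y}(0)=0$, so the source chart is already in the normal form used when deriving $(\ref{parabola})$: the first fundamental form satisfies $E=1$, $F=G=0$. Taking $\nu_{1}=e_{2}$ and $\nu_{2}=e_{3}$ (in $\mathbb{R}^{3}$) as orthonormal frame of $N_{\pi_{\mathbf{v}}(p)}\pi_{\mathbf{v}}(N)$, the coefficients $l_{\nu_{i}}, m_{\nu_{i}}, n_{\nu_{i}}$ are again precisely the second partials of $f_{1},f_{2}$ at the origin, and therefore agree with the $l_{i},m_{i},n_{i}$ computed above. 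Substituting into $(\ref{parabola})$ produces the stated $\eta_{p}(y)$.

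There is no real obstacle: the argument is pure bookkeeping once the convention on $a_{ij},b_{ij}$ is pinned down. The only point requiring a moment's care is precisely that choice of convention (which differs by a factor of $\tfrac{1}{2}$ from the one used inside the proof of Theorem \ref{conditionsparabola}), after which the factors of $2$ in $\eta_{e}$ and $\eta_{p}$ come out automatically from the $l_{i}=2a_{20}$ type identifications and the $2m_{i}\cos\theta\sin\theta$, $2m_{\nu_{i}}y$ terms in formulas $(\ref{ellipse})$ and $(\ref{parabola})$ respectively.
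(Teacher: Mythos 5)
Your proposal is correct and follows essentially the same route as the paper: both amount to reading off the coefficients of the second fundamental forms from the $2$-jets of $f_1,f_2$ (which coincide for $N$ at $p$ and for $\pi_{\bf v}(N)$ at $\pi_{\bf v}(p)$) and substituting them into the general formulas (\ref{ellipse}) and (\ref{parabola}), using that the degenerate first fundamental form $E=1$, $F=G=0$ makes $C_q=\{(\pm1,y)\}$ the set of unit vectors. The paper merely packages the same bookkeeping as the passage from $\eta_e$ to $\eta_p$ by dividing by $\cos^2(\theta)$ and setting $y=\tan(\theta)$ (homogeneous coordinates on the projective tangent line), and your remark about the factor-of-$\tfrac12$ convention for the $a_{ij},b_{ij}$ correctly pins down the normalisation used in Section 3.
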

\begin{proof}
The curvature ellipse is parametrised by the unit vectors in $T_pN$,
i.e. $\eta_e:S^1\subset T_pN\rightarrow N_pN$. On the other hand the
curvature parabola is also parametrised by unit vectors but in the
tangent space of $\tilde M$ we have a pseudo-metric induced by the
first fundamental form, not a metric. Namely, if the projection is
parametrised by $f(x,y)=(x,f_1(x,y),f_2(x,y))$, then $E=1$, $F=G=0$,
and so $I(X,X)=a^2$, where $X=a\partial_x+b\partial_y\in T_{q}\tilde
M$. So unit vectors are vectors in $C_q=\{(\pm 1,y): y\in\R\}$.
Direct computation in each case shows that the curvature parabola is
obtained from the curvature ellipse by dividing each component by
$\cos(\theta)^2$ and the change $\tan(\theta)=y$. In other words, we
change $S^1\subset T_pN$ to homogeneous coordinates of the
projective line when $\cos(\theta)\neq 0$, and $\cos(\theta)=0$
corresponds to the null tangent direction $X=\partial_y$.
\end{proof}

When projecting along an asymptotic direction the projected surface generically has one of the singularities in Table \ref{tab:Mondcodimle3}.
Notice that $S_k, B_k$, and $C_3$ have 2-jet equivalent to $(x,y^2,0)$ and hence the associated curvature parabola is a half-line. Therefore, these points can only be a hyperbolic point or an inflection point depending on wether there are 2 or infinite asymptotic directions respectively. According to the proof of Theorem \ref{main}, the first case occurs if $a_{20}b_{02}-b_{20}a_{02}\neq 0$ and the second case occurs if $a_{20}b_{02}-b_{20}a_{02}=0$ and $a_{02}^2+b_{02}^2>0$.

On the other hand, $H_k$ and $P_3(c)$ have 2-jet equivalent to
$(x,xy,0)$, and so the curvature parabola is a line, so these points
can only be a parabolic or an inflection point depending on wether
there is 1 or infinite asymptotic directions. According to the proof
of Theorem \ref{main}, this happens when
$a_{20}b_{11}-b_{20}a_{11}\neq 0$, $a_{02}=b_{02}=0$ and
$a_{11}^2+b_{11}^2>0$ for the parabolic case and when
$a_{20}b_{11}-b_{20}a_{11}=0$, $a_{02}=b_{02}=0$ and
$a_{11}^2+b_{11}^2>0$ for the inflection case.

Therefore, the proof of Theorem \ref{main} allows us to recover Theorem 2.5 in \cite{OsetSinhaTari} which states precisely the above discussion.

\end{document}